\numberwithin{equation}{section}
\def\3bar{{|\hspace{-.02in}|\hspace{-.02in}|}}
\def\E{{\mathcal{E}}}
\def\T{{\mathcal{T}}}
\def\bn{{\mathbf{n}}}
\newtheorem{defi}{Definition}[section]
\newtheorem{remark}{Remark}[section]
\newtheorem{algorithm}{Weak Galerkin Algorithm}
\def\ad#1{\begin{aligned}#1\end{aligned}}   
 \def\an#1{\begin{align}#1\end{align}} 
\def\p#1{\begin{pmatrix}#1\end{pmatrix}}
\title {A Simple Weak Galerkin Finite Element Method for a Class of Fourth-Order Problems in Fluorescence Tomography}
\author{
 Chunmei Wang \thanks{Department of Mathematics, University of Florida, Gainesville, FL 32611, USA (chunmei.wang@ufl.edu). The research of Chunmei Wang was partially supported by National Science Foundation Grant DMS-2136380.}
  \and
 Shangyou Zhang\thanks{Department of Mathematical Sciences,  University of Delaware, Newark, DE 19716, USA (szhang@udel.edu).  }
 }
\begin{document}

\maketitle

\begin{abstract}
In this paper, we propose a simple numerical algorithm based on the weak Galerkin (WG) finite element method for a class of fourth-order problems in fluorescence tomography (FT), eliminating the need for stabilizer terms required in traditional WG methods. FT is an emerging, non-invasive 3D imaging technique that reconstructs images of fluorophore-tagged molecule distributions in vivo. By leveraging bubble functions as a key analytical tool, our method extends to both convex and non-convex elements in finite element partitions, representing a significant advancement over existing stabilizer-free WG methods. It overcomes the restrictive conditions of previous approaches, offering substantial advantages. The proposed method preserves a simple, symmetric, and positive definite structure. These advantages are confirmed by optimal-order error estimates in a discrete $H^2$ norm, demonstrating the effectiveness and accuracy of our approach. Numerical experiments further validate the efficiency and precision of the proposed method.

\end{abstract}

\begin{keywords} weak Galerkin,   stabilizer-free, bubble functions,
     non convex polygonal  or polyhedral meshes,  fluorescence tomography.
\end{keywords}

\begin{AMS}
Primary, 65N30, 65N15, 65N12, 74N20; Secondary, 35B45, 35J50, 35J35
\end{AMS}

\pagestyle{myheadings}

\section{Introduction}
This paper focuses on the development of numerical methods for a class of fourth-order problems with Dirichlet and Neumann boundary conditions. The model problem seeks an unknown function  $u=u(x)$ satisfying
\begin{equation}\label{0.1}
\begin{split}
( -\nabla\cdot(\kappa \nabla)+\mu)^2u&=f, \quad\text{in}\ \Omega,\\
u&=\xi, \quad\text{on}\ \partial\Omega,\\
\kappa  \nabla u\cdot \textbf{n} &=\nu, \quad  \text{on}\ \partial\Omega,\\
\end{split}
\end{equation}
where   $\Omega$ is an open, bounded domain in  $\mathbb{R}^d$ ($d=2,3$)
with a Lipschitz continuous boundary $\partial\Omega$. Here, $\bn$ denotes the unit outward normal to $\partial\Omega$, $\kappa$ is a  
 symmetric and positive definite matrix-valued function, and $\mu$ is a nonnegative real-valued function.
The functions 
$f$, $\xi$, and $\nu$ are given in the domain or on its boundary, as appropriate.

 For convenience, we denote the second-order elliptic operator $ \nabla\cdot(\kappa \nabla)$ by $E$. To simplify the analysis, and without loss of generality, we assume that 
 $\kappa$ is piecewise constant matrix and $\mu$  is a piecewise constant function.

The fourth order model problem (\ref{0.1}) is derived from fluorescence tomography (FT)  \cite{ yinkepaper, gkr2014, gz2013, ltka2015, mtspa2011, wbwm2003, wbwm2004,  yinkethesis}, an advanced noninvasive 3D imaging technique used for in vivo applications. 
Unlike traditional imaging methods that rely on X-rays or strong magnetic fields, FT utilizes highly specific fluorescent probes and non-ionizing near-infrared radiation \cite{p-25}, reducing potential health risks. The primary objective of FT is to reconstruct the spatial distribution of fluorophores, which are bound to target molecules, based on boundary measurements. Given its capability to provide molecular-level imaging, FT has emerged as a valuable tool for early cancer detection and drug monitoring \cite{p-5, p-24, p-39}. 

We define the function space
$$
H_{\kappa}^2(\Omega)=\{v:v\in H^1(\Omega),\kappa \nabla v\in H(\text{div};\Omega)\},
$$
equipped with the norm
$$
\|v\|_{\kappa,2}=(\|v\|_1^2+\|\nabla\cdot (\kappa \nabla v)\|^2)^{\frac{1}{2}}.
$$

A variational formulation for the fourth-order model problem \eqref{0.1} seeks a function 
 $u\in H_{\kappa}^2(\Omega)$ satisfying the boundary conditions $u|_{\partial
\Omega}=\xi$, $\kappa  \nabla u\cdot \textbf{n}|_{\partial
\Omega}  =\nu$, such that
\begin{equation}\label{0.2}
 (Eu,Ev)+2\mu (\kappa \nabla u,\nabla v)+\mu^2(u,v)=(f,v), \quad
\forall v\in {\cal V},
\end{equation}
where  $(\cdot,\cdot)$ denotes the standard inner product in
$L^2(\Omega)$. The test space ${\cal V}$  is given by 
$$
{\cal V}=\{v\in H_\kappa^2(\Omega):v|_{\partial\Omega}=0,\kappa \nabla v\cdot \textbf{n}|_{\partial \Omega}=0\}.
$$
Here, $\bn$ represents the outward unit normal to the boundary of $\Omega$.

 The weak Galerkin (WG) finite element method represents a significant advancement in numerical techniques for solving partial differential equations (PDEs). It reconstructs or approximates differential operators in a manner analogous to the theory of distributions for piecewise polynomials. Unlike traditional finite element methods, WG relaxes the usual regularity requirements on approximating functions by incorporating carefully designed stabilizers. Extensive research has demonstrated the effectiveness of WG across a variety of model PDEs, as evidenced by a comprehensive set of references \cite{wg1, wg2, wg3, wg4, wg5, wg6, wg7, wg8, wg9, wg10, wg11, wg12, wg13, wg14, wg15, wangft, wg17, wg18, wg19, wg20, wg21, itera, wy3655}, underscoring its potential as a powerful tool in scientific computing.

A key distinguishing feature of WG methods is their reliance on weak derivatives and weak continuities to construct numerical schemes based on the weak formulation of PDEs. This structural flexibility enhances their applicability across a broad spectrum of PDEs, ensuring both stability and accuracy in numerical approximations.

An important advancement within the WG framework is the Primal-Dual Weak Galerkin (PDWG) method, which addresses challenges that conventional numerical techniques often encounter \cite{pdwg1, pdwg2, pdwg3, pdwg4, pdwg5, pdwg6, pdwg7, pdwg8, pdwg9, pdwg10, pdwg11, pdwg12, pdwg13, pdwg14, pdwg15}. PDWG formulates numerical solutions as constrained minimizations of functionals, where the constraints capture the weak formulation of PDEs through weak derivatives. This leads to an Euler-Lagrange equation that integrates both the primal variable and a dual variable (Lagrange multiplier), resulting in a symmetric numerical scheme.

The variational formulation in \eqref{0.2} differs significantly from the standard biharmonic equation. First, conventional $H^2$-conforming finite elements designed for the biharmonic problem are not necessarily $H_\kappa^2$-conforming and therefore cannot be directly applied to \eqref{0.2}. Additionally, widely used nonconforming elements, such as the Morley element \cite{m1968}, typically rely on  formulations involving the full Hessian. Since it is unclear whether \eqref{0.2} can be reformulated to align with such elements, their direct applicability remains uncertain. In fact, we suspect that these elements may not be suitable for this problem.

This paper introduces a simplified weak Galerkin (WG) finite element formulation that eliminates the need for stabilizers required in \cite{wangft}, significantly streamlining both the numerical scheme and its implementation. Furthermore, our approach extends the applicability of WG methods to both convex and non-convex polytopal meshes, whereas existing methods have been limited to convex cases. A key analytical tool enabling these advancements is the use of bubble functions. Our method preserves the size and global sparsity of the stiffness matrix, reducing programming complexity compared to traditional stabilizer-dependent WG methods. Theoretical analysis establishes that our WG approximations achieve optimal error estimates in the discrete $H^2$ norm. By introducing a stabilizer-free WG method that maintains accuracy while improving computational efficiency, this paper makes a significant contribution to the development of finite element methods for a class of fourth-order problems on non-convex polytopal meshes.

The paper is structured as follows. Section 2 provides a concise review of weak differential operators and their discrete counterparts. In Section 3, we introduce a simple
weak Galerkin algorithm for the fourth-order model problem (\ref{0.1}), formulated based on the variational approach in (\ref{0.2}). Section 4 presents local $L^2$ projection operators and establishes key approximation properties essential for the convergence analysis. Section 5 is dedicated to deriving the error equation for the WG finite element solution. In Section 6, we prove an optimal-order error estimate for the WG finite element approximation in an $H^2$-equivalent discrete norm. Finally, Section 7 provides numerical results that validate the theoretical findings presented in the preceding sections.

\section{Weak Differential Operators and Their Discrete Counterparts} 
In this section, we briefly review the definitions of the weak second-order elliptic operator 
$E$ \cite{wangft} and the weak gradient operator \cite{wy3655}, along with their discrete formulations.

Let ${\cal T}_h$ be a partition of the domain $\Omega$ into polygons
in  2D or polyhedra in 3D.  We assume that ${\cal T}_h$ satisfies the shape regularity condition as defined in \cite{wy3655}.
Denote by $\E_h$ the set of all edges (in 2D) or flat faces (in 3D) within ${\cal T}_h$, and let
$\E_h^0=\E_h\setminus\partial\Omega$ represent the set of all interior edges or faces.   A weak function on an element $T\in {\cal T}_h$ is defined as a triplet $v=\{v_0,v_b,v_g\}$, where $v_0\in L^2(T)$ represents the function's value in the interior of 
$T$,
$v_b\in L^{2}(\partial T)$ represents the function's value on the boundary of $T$, and $v_g\in L^{2}(\partial
T)$ approximates the normal flux 
$\kappa\nabla v \cdot \textbf{n}$ on  $\partial T$, with $\textbf{n}$ denoting the outward unit normal.
 On each interior edge or face 
$e\in {\cal E}_h^0$, shared by two elements $T_L$ and $T_R$, the function $v_g$ has two values: $v_g^L$, as seen from element $T_L$, and $v_g^R$, as seen from element $T_R$ which satisfy the condition $v_g^L + v_g^R = 0$.

The space of all weak functions on 
$T$ is given by 
$$
W(T)=\{v=\{v_0,v_b,v_g\}: v_0\in L^2(T), v_b\in
L^{2}(\partial T), v_g\in L^{2}(\partial T) \}.
$$ 

\begin{defi}\cite{wangft}  For any $v\in
W(T)$, the weak second-order elliptic operator, denoted as 
$E_{ w} v$, is defined as a linear functional in the dual space of $H^2(T)$.  Its action on 
each $\varphi \in H^2(T)$ is given by
 \begin{equation}\label{2.3}
 (E_{ w}v,\varphi)_T=(v_0,E\varphi)_T-
 \langle v_b ,\kappa \nabla \varphi\cdot  \textbf{n}\rangle_{\partial T}+
 \langle v_{g },\varphi  \rangle_{\partial T}.
 \end{equation}
\end{defi}

For any
non-negative integer $r\geq 0$, let $P_r(T)$ denote the space of polynomials on $T$ with a degree at most $r$. The discrete weak second-order elliptic operator, denoted by 
$E_{ w,r,T}$, is defined as the unique polynomial
 $E_{ w,r,T} v\in P_r(T)$ satisfying 
  \begin{equation}\label{2.4}
  (E_{ w}v,\varphi)_T=(v_0,E\varphi)_T-
 \langle v_b ,\kappa \nabla \varphi\cdot  \textbf{n}\rangle_{\partial T}+
 \langle v_{g },\varphi  \rangle_{\partial T},\quad \forall \varphi \in
 P_r(T). 
 \end{equation}
 Using integration by parts, this can be rewritten as 
 \begin{equation}\label{A.002}
(E_{w}v,
\varphi)_T=(Ev_0,\varphi)_T+\langle
v_0-v_b, \kappa\nabla\varphi\cdot \textbf{n} \rangle_{\partial T}-\langle
 \kappa \nabla v_0\cdot \textbf{n}-v_{g },\varphi  \rangle_{\partial T},\forall \varphi \in
 P_r(T).
\end{equation}

\begin{defi} \cite{wy3655} The weak gradient operator for any  $v\in
W(T)$, denoted  as $ \nabla_{ w} v$,   is defined as a linear vector functional in the dual space of $[H^1(T)]^d$.  Its action on each $\boldsymbol{ \psi} \in [H^1(T)]^d$ is given by
 \begin{equation}\label{2.3-2}
 ( \nabla_{ w}v,\boldsymbol{ \psi})_T=-(v_0, \nabla \cdot \boldsymbol{ \psi})_T+
 \langle v_b ,\boldsymbol{ \psi}\cdot  \textbf{n}\rangle_{\partial T}.
 \end{equation}
\end{defi}

 The discrete weak gradient operator, denoted by
$ \nabla_{ w,r,T}$, is defined as the unique vector polynomial 
 $ \nabla_{ w,r,T} v\in [P_r(T)]^d$ satisfying 
  \begin{equation}\label{2.4-2}
  ( \nabla_{ w}v,\boldsymbol{ \psi})_T=-(v_0, \nabla \cdot \boldsymbol{ \psi})_T+
 \langle v_b ,\boldsymbol{ \psi}\cdot  \textbf{n}\rangle_{\partial T},\quad \forall \boldsymbol{ \psi}\in
 [P_r(T)]^d. 
 \end{equation}
 Using integration by parts, this can be reformulated as
 \begin{equation}\label{2.4-3}
  ( \nabla_{ w}v,\boldsymbol{ \psi})_T= (\nabla v_0,  \boldsymbol{ \psi})_T-
 \langle v_0-v_b ,\boldsymbol{ \psi}\cdot  \textbf{n}\rangle_{\partial T},\quad \forall \boldsymbol{ \psi}\in
 [P_r(T)]^d. 
 \end{equation}
 
\section{Stabilizer-Free Weak Galerkin Methods}\label{Section:WGFEM}
For any given integer $k\geq 1$, let $W_k(T)$ denote the local discrete weak function space defined as
\an{\label{Wk}
W_k(T) & =\big\{v=\{v_0,v_b, v_g\}: v_0\in P_k(T), v_b\in
P_{k }(e),v_g\in  P_{k-1}(e), e\subset \partial
T\big\}. }

By assembling $W_k(T)$ over all the elements $T\in {\cal T}_h$ and enforcing continuity across the interior edges $\E_h^0$, we define the global weak finite element space $V_h$  as  
$$
V_h=\big\{v=\{v_0,v_b,v_g\}:\{v_0,v_b,v_g\}|_T\in
W_k(T), \forall T\in {\cal T}_h\big\}.
$$

Furthermore, we define the subspace $V_h^0$ consisting of functions with vanishing traces,
$$
V_h^0=\{v=\{v_0,v_b,v_g\}\in
V_h,v_b|_e=0,v_g|_e=0, e\subset \partial T\cap
\partial\Omega\}.
$$

Denote by
$E_{w, r_1}$ and  $ \nabla_{w, r_2}$ the discrete weak second-order elliptic operator and the discrete weak gradient operator, respectively, which are computed on each element $T$ for $k\geq 1$ using equations (\ref{2.4}) and (\ref{2.4-2}). Specifically, for $v\in V_h$, we have
$$
(E_{ w, r_1} v)|_T=E_{ w, r_1,T}(v|_T),  
 \qquad 
( \nabla_{ w, r_2} v)|_T= \nabla_{ w, r_2,T}(v|_T).
$$

For simplicity, we omit the subscripts $r_1$ and $r_2$ in $E_{ w, r_1}$ and $ \nabla_{ w, r_2} $.

For each element $T$, let $Q_0$ be the $L^2$ projection onto $P_{k}(T)$, and for each edge or face $e\subset\partial T$,
let $Q_{b}$ and $Q_{g}$ be the $L^2$ projections onto $P_{k}(e)$ and $P_{k-1}(e)$, respectively. Given $u\in H^2(\Omega)$,  we define a projection onto the weak finite element space $V_h$ by
$$
Q_hu=\{Q_0u,Q_bu,Q_g(\kappa \nabla u \cdot \textbf{n})\}.
$$ 

We now present the stabilizer-free weak Galerkin finite element scheme for the fourth-order model problem (\ref{0.1}), based on the variational formulation (\ref{0.2}).

\begin{algorithm} Find $u_h=\{u_0,u_b, u_g\}\in V_h$
satisfying the boundary conditions 
 $u_b=Q_{b}\xi$ and $u_g =Q_{g}\nu$  on
$\partial\Omega$, such that
\begin{equation}\label{PDWG1}
\sum_{T\in {\cal T}_h}(E_{w} u_h,E_{w}v )_T +2\mu (\kappa \nabla_w u_h,\nabla_w v )_T +\mu^2(u_0,v_0)_T  =\sum_{T\in {\cal T}_h}(f,v_0)_T,   
\forall v \in V_h^0.
\end{equation}
\end{algorithm}

 For any $v\in V_h$, define an energy norm $\3barv\3bar$ by
\begin{equation}\label{3barnorm}
\3barv\3bar^2= \sum_{T\in {\cal T}_h}(E_{w} v,E_{w}v )_T+2\mu (\kappa \nabla_w v,\nabla_w v )_T +\mu^2(v_0,v_0 )_T .
\end{equation}
Additionally, we define the discrete $H^2$ semi-norm as 
 \begin{equation}\label{disnorm}
     \begin{split}
 & \|v\|_{2, h}^2= \sum_{T\in {\cal T}_h}(E v_0,E v_0)_T+2\mu (\kappa \nabla  v_0,\nabla  v_0)_T+\mu^2(v_0, v_0)_T\\&+   h_T^{-1}\langle  \kappa \nabla
u_0 \cdot \textbf{n}- u_g, \kappa \nabla
v_0 \cdot \textbf{n}-
v_g\rangle_{\partial T}+ h_T^{-3}\langle  u_0-u_b,  
v_0-v_b\rangle_{\partial T}.       
     \end{split}
 \end{equation}

For any element $T\in\T_h$ and any function $\varphi\in H^1(T)$, the trace inequality (see \cite{wy3655}) asserts that
\begin{equation}\label{trace-inequality}
\|\varphi\|_{\partial T}^2 \leq C
(h_T^{-1}\|\varphi\|_T^2+h_T\|\nabla\varphi\|_T^2).
\end{equation}
Moreover, if  $\varphi$ is a polynomial on $T$, then we
have from the inverse inequality (see also \cite{wy3655}) that
\begin{equation}\label{x}
\|\varphi\|_e^2 \leq C h_T^{-1}\|\varphi\|_T^2.
\end{equation}
Here $e$ is an edge or flat face on the boundary of $T$.

\begin{lemma}\label{norm1}
 For $v=\{v_0, v_b, v_g\}\in V_h$, there exists a constant $C$ such that
 $$
 \|E v_0\|_T\leq C\|E_{w} v\|_T.
 $$
\end{lemma}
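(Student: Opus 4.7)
The plan is to apply the integration-by-parts identity \eqref{A.002} to a test polynomial specifically designed to kill the two boundary contributions and isolate $Ev_0$. Since $v_0 \in P_k(T)$ and $\kappa$ is piecewise constant, we have $Ev_0 \in P_{k-2}(T)$. I would choose
$$
\varphi \;=\; b_T \, E v_0,
$$
where $b_T$ is a bubble function on $T$ vanishing to second order on $\partial T$, i.e., both $b_T|_{\partial T}=0$ and $\nabla b_T \cdot \textbf{n}|_{\partial T}=0$. Such bubble functions, valid on both convex and non-convex polytopal elements, are precisely the analytical device highlighted in the introduction; the polynomial degree $r_1$ implicit in $E_{w,r_1}$ is assumed to be large enough that $b_T \, P_{k-2}(T) \subset P_{r_1}(T)$, so that the identity \eqref{A.002} may be tested against this $\varphi$.

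With this choice, both boundary integrals in \eqref{A.002} drop out. Indeed, $\nabla \varphi \cdot \textbf{n} = (Ev_0)(\nabla b_T \cdot \textbf{n}) + b_T (\nabla (Ev_0) \cdot \textbf{n}) = 0$ on $\partial T$, and $\varphi = 0$ on $\partial T$, so $\langle v_0 - v_b, \kappa \nabla \varphi \cdot \textbf{n}\rangle_{\partial T}=0$ and $\langle \kappa \nabla v_0 \cdot \textbf{n} - v_g, \varphi\rangle_{\partial T}=0$. Hence \eqref{A.002} collapses to $(E_w v, \varphi)_T = (Ev_0, b_T \, Ev_0)_T$. The next ingredient is a norm-equivalence argument on the finite-dimensional space $P_{k-2}(T)$: because $b_T \geq 0$ on $T$ and is strictly positive on a set of positive measure, the quadratic form $p \mapsto (b_T p, p)_T$ is a norm on $P_{k-2}(T)$, and combined with shape regularity and standard scaling one obtains $(b_T E v_0, E v_0)_T \geq C \|E v_0\|_T^2$. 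Applying Cauchy--Schwarz on the left-hand side together with the trivial bound $\|\varphi\|_T \leq \|b_T\|_{L^\infty(T)} \|E v_0\|_T$ gives
$$
C \|E v_0\|_T^2 \;\leq\; (E_w v, \varphi)_T \;\leq\; \|E_w v\|_T \|\varphi\|_T \;\leq\; C' \|E_w v\|_T \|E v_0\|_T,
$$
and dividing by $\|Ev_0\|_T$ yields the claim.

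The principal obstacle is the construction of the bubble function $b_T$ with the required second-order vanishing on $\partial T$ on general, possibly non-convex, polytopal elements, together with the uniform lower bound $(b_T p, p)_T \geq C \|p\|_T^2$ on $P_{k-2}(T)$ with $C$ independent of $T$. On simplices one can simply take the square of the standard polynomial bubble, but on general polytopal cells a more delicate construction is needed; this is presumably the technical contribution developed elsewhere in the paper, and once it is in hand the remainder of the proof is a direct computation relying only on the definition of $E_w$, integration by parts, and finite-dimensional norm equivalence.
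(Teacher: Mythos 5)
Your proposal follows essentially the same route as the paper: test \eqref{A.002} with $\varphi = \Phi_B\,Ev_0$, where the bubble $\Phi_B=\prod_{i=1}^N l_i^2$ (a product of squared linear functions vanishing on each face) vanishes to second order on $\partial T$ so that both boundary terms drop, then use the positivity of the bubble-weighted $L^2$ form on polynomials (the "domain inverse inequality") together with Cauchy--Schwarz. The construction you flag as the principal obstacle is handled in the paper exactly as you anticipate for simplices, generalized to arbitrary (possibly non-convex) polytopal $T$ by squaring each face's linear factor, with the discrete operator's degree taken as $r=2N+k-2$ to accommodate $\Phi_B\,Ev_0$.
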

\begin{proof} Let  $T\in {\cal T}_h$ be a polytopal element with $N$ edges or faces, denoted by $e_1, \cdots, e_N$. Note that  $T$ may be non-convex. For each edge/face $e_i$, we define a linear function $l_i(x)$  satisfying  $l_i(x)=0$ on $e_i$ as follows: 
$$l_i(x)=\frac{1}{h_T}\overrightarrow{AX}\cdot \bn_i, $$  where  $A$ is a given point on  $e_i$,  $X$ is any point on  $e_i$, $\bn_i$ is the unit normal to $e_i$, and $h_T$ represents the  size of $T$. 

The bubble function associated with  $T$ is defined as 
 $$
 \Phi_B =l^2_1(x)l^2_2(x)\cdots l^2_N(x) \in P_{2N}(T).
 $$ 
 It is straightforward to verify that  $\Phi_B=0$ on  $\partial T$. Moreover, 
  $\Phi_B$ can be normalized so that $\Phi_B(M)=1$ where   $M$  denotes the barycenter of $T$. Additionally,  there exists a sub-domain $\hat{T}\subset T$ where $\Phi_B\geq \rho_0$ for some constant $\rho_0>0$.

For $v=\{v_0, v_b, v_g\}\in V_h$, setting $r=2N+k-2$ and  choosing  $\varphi=\Phi_B E v_0\in P_r(T)$ in \eqref{A.002} leads to 
 \begin{equation} \label{t1}
  \begin{split}
  &(E_{w}v, \Phi_B E v_0)_T\\=&(Ev_0,\Phi_B E v_0)_T+\langle
v_0-v_b, \kappa\nabla (\Phi_B E v_0) \cdot \textbf{n} \rangle_{\partial T}-\langle
 \kappa \nabla v_0\cdot \textbf{n}-v_{g }, \Phi_B E v_0  \rangle_{\partial T}\\
 =&(Ev_0,\Phi_B E v_0)_T,
  \end{split}
  \end{equation}
where the boundary condition 
$\Phi_B=0$ on $\partial T$  is used.

Using the domain inverse inequality \cite{wy3655}, there exists a constant $C$ such that 
\begin{equation}\label{t2}
(E v_0, \Phi_B E v_0)_T \geq C (E v_0, E v_0)_T.
\end{equation} 

Applying the Cauchy–Schwarz inequality together with \eqref{t1}–\eqref{t2}, we obtain
 $$
 (E v_0, E v_0)_T\leq C (E_{w} v, \Phi_B E v_0)_T  \leq C  \|E_{w} v\|_T \|\Phi_B E v_0\|_T  \leq C
\|E_{w} v\|_T \|E v_0\|_T.
 $$
Thus, we conclude
 $$
 \|E v_0\|_T\leq C\|E_{w} v\|_T.
 $$

This completes the proof.
\end{proof}

\begin{remark}
  When the polytopal element  $T$  is convex, 
   the bubble function introduced in Lemma \ref{norm1} can be   simplified to
 $$
 \Phi_B =l_1(x)l_2(x)\cdots l_N(x).
 $$  
This function satisfies the following properties: (1) $\Phi_B=0$
 on $\partial T$, (2) there exists a  subregion $\hat{T}\subset T$ where $\Phi_B\geq \rho_0$ for some constant $\rho_0>0$.
 
Using this simplified bubble function, Lemma \ref{norm1} can be established through the same reasoning as before, with the parameter choice  $r=N+k-2$.  
\end{remark}

\begin{lemma}\cite{wangauto}\label{norm1-1}
For any function $v=\{v_0, v_b, v_g\}\in V_h$, there exists a constant $C$ such that
 $$
 \|\nabla v_0\|_T\leq C\|\nabla_{w} v\|_T.
 $$
\end{lemma}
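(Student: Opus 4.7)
The plan is to mimic the bubble-function argument from Lemma \ref{norm1} verbatim, but replacing the reformulation \eqref{A.002} of the discrete weak elliptic operator with the integration-by-parts form \eqref{2.4-3} of the discrete weak gradient, and replacing the scalar test function $\Phi_B E v_0$ by the vector-valued test function $\boldsymbol{\psi}=\Phi_B\nabla v_0$. Since $\nabla v_0\in [P_{k-1}(T)]^d$ rather than $Ev_0\in P_{k-2}(T)$, the only bookkeeping difference is the polynomial degree of the resulting test object.

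Concretely, for a polytopal element $T\in{\cal T}_h$ with $N$ edges or faces (possibly non-convex), I would reuse the bubble function
\begin{equation*}
\Phi_B=l_1^2(x)l_2^2(x)\cdots l_N^2(x)\in P_{2N}(T)
\end{equation*}
from Lemma \ref{norm1}, which satisfies $\Phi_B=0$ on $\partial T$ and $\Phi_B\geq \rho_0$ on a sub-domain $\hat T\subset T$. Setting $r_2=2N+k-1$ so that $\boldsymbol{\psi}=\Phi_B\nabla v_0\in [P_{r_2}(T)]^d$ is an admissible test function in \eqref{2.4-3}, the boundary term $\langle v_0-v_b,\boldsymbol{\psi}\cdot\mathbf{n}\rangle_{\partial T}$ drops out thanks to $\Phi_B|_{\partial T}=0$, leaving
\begin{equation*}
(\nabla_w v,\Phi_B\nabla v_0)_T=(\nabla v_0,\Phi_B\nabla v_0)_T.
\end{equation*}

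I would then apply the domain inverse inequality (as in \eqref{t2}) to obtain $(\nabla v_0,\Phi_B\nabla v_0)_T\geq C\|\nabla v_0\|_T^2$, and finally use Cauchy--Schwarz together with $\|\Phi_B\nabla v_0\|_T\leq \|\nabla v_0\|_T$ to conclude
\begin{equation*}
\|\nabla v_0\|_T^2\leq C(\nabla_w v,\Phi_B\nabla v_0)_T\leq C\|\nabla_w v\|_T\|\nabla v_0\|_T,
\end{equation*}
from which the claimed estimate follows after dividing by $\|\nabla v_0\|_T$ (the case $\nabla v_0\equiv 0$ is trivial).

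The only real issue is polynomial-degree bookkeeping: one must choose the discrete weak gradient degree $r_2$ large enough to accommodate $\Phi_B\nabla v_0$, namely $r_2\geq 2N+k-1$ in the non-convex case, or $r_2\geq N+k-1$ if one uses the simpler bubble $\Phi_B=l_1\cdots l_N$ available on convex elements per the remark following Lemma \ref{norm1}. Apart from this adjustment, no qualitatively new obstacle beyond the one already handled in Lemma \ref{norm1} is expected, which explains why the statement is cited from \cite{wangauto} and not reproved in this paper.
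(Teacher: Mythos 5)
Your proposal is correct and follows exactly the bubble-function strategy the paper uses for the companion result, Lemma \ref{norm1}: test \eqref{2.4-3} with $\boldsymbol{\psi}=\Phi_B\nabla v_0$, kill the boundary term via $\Phi_B|_{\partial T}=0$, and combine the domain inverse inequality with Cauchy--Schwarz (the paper itself does not reprove Lemma \ref{norm1-1} but cites \cite{wangauto}, where the argument is of this same form). The only cosmetic point is that $\|\Phi_B\nabla v_0\|_T\leq C\|\nabla v_0\|_T$ holds with a shape-regularity constant rather than with constant $1$, which does not affect the conclusion.
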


To define an edge/face-based bubble function, we introduce
$$\varphi_{e_k}= \Pi_{i=1, \cdots, N, i\neq k}l_i^2(x).$$   This function satisfies: (1) $\varphi_{e_k}=0$ on every edge/face  $e_i$ for $i \neq k$, (2) there exists a subset $\widehat{e_k}\subset e_k$ such that  $\varphi_{e_k} \geq \rho_1$ for some constant $\rho_1>0$.  Now, let $$\varphi=(v_b-v_0)l_k \varphi_{e_k}.$$ Then, it follows that $\varphi=0$ on all edges/faces $e_i$ for $i=1, \cdots, N$, and $\nabla \varphi =0$ on $e_i$ for $i \neq k$.  Moreover,  the gradient takes the form $\nabla \varphi =(v_0-v_b)(\nabla l_k) \varphi_{e_k}=\mathcal{O}( \frac{ (v_0-v_b)\varphi_{e_k}}{h_T}\textbf{C})$  on $e_k$ for some vector constant $\textbf{C}$.
\begin{lemma}\label{phi}
     For $\{v_0,v_b, v_g\}\in V_h$, let $\varphi=(v_b-v_0)l_k \varphi_{e_k}$. Then, the inequality
\begin{equation}
  \|\varphi\|_T ^2 \leq Ch_T  \int_{e_k}(v_b-v_0)^2ds.
\end{equation}
holds.\end{lemma}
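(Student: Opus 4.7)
The plan is a scaling argument to a reference element, exploiting that the weights $l_k$ and $\varphi_{e_k}$ are uniformly bounded and scale-invariant. First I would interpret the factor $(v_b-v_0)$ in the definition of $\varphi$ as the canonical polynomial extension from $e_k$ to $T$, i.e.\ the polynomial that agrees with $v_b-v_0|_{e_k}$ on $e_k$ and is constant along the normal direction $\bn_k$. Under this convention $\varphi$ is a polynomial on $T$ whose degree is bounded in terms of $k$ and $N$ only.

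Next, I would observe that because each $l_i$ is normalized by $1/h_T$, we have $|l_i|\le 1$ on $T$; therefore $|l_k\varphi_{e_k}|\le C$ on $T$ with $C$ independent of $h_T$. This gives the pointwise bound
\[
|\varphi|^2 \;\leq\; C\,(v_b-v_0)^2 \qquad \text{on } T,
\]
so it remains to bound $\int_T (v_b-v_0)^2\,dx$ by $Ch_T \int_{e_k}(v_b-v_0)^2\,ds$. I would rescale to a reference element $\hat T$ of unit diameter via $x = M + h_T\hat x$, using shape regularity to keep $\hat T$ in a bounded family of reference configurations. The $l_i$ are scale-invariant, and the pulled-back polynomial $\widehat{v_b-v_0}$ lies in a fixed finite-dimensional space of polynomials that depend only on the tangential coordinate along $\hat e_k$. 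On this finite-dimensional space the norms $\|\cdot\|_{L^2(\hat T)}$ and $\|\cdot\|_{L^2(\hat e_k)}$ are equivalent, so $\|\widehat{v_b-v_0}\|_{L^2(\hat T)}^2 \leq C\|\widehat{v_b-v_0}\|_{L^2(\hat e_k)}^2$. Pulling back using $\|\varphi\|_T^2 = h_T^d\|\hat\varphi\|_{\hat T}^2$ and $\int_{e_k}(v_b-v_0)^2\,ds = h_T^{d-1}\int_{\hat e_k}(\widehat{v_b-v_0})^2\,d\hat s$ produces the net factor $h_T^{d-(d-1)}=h_T$ in the bound.

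The main obstacle is ensuring the constant $C$ from the finite-dimensional norm-equivalence step is uniform over the shape-regular family of (possibly non-convex) reference elements $\hat T$; for irregular shapes the projection of $\hat T$ onto the hyperplane of $\hat e_k$ can overshoot $\hat e_k$, so a naive slice-wise integration along the normal direction is not available, and one must appeal to the full finite-dimensional norm equivalence on the space of extended polynomials together with the shape regularity hypothesis of $\T_h$. All other ingredients — polynomial extension, scale invariance of $l_i$, and boundedness of $l_k\varphi_{e_k}$ — are routine and give the $h_T$ factor through dimensional counting alone.
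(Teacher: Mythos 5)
Your proof is correct, but it takes a genuinely different route from the paper's. You both start from the same (necessary) interpretation of $(v_b-v_0)$ as the polynomial extension from $e_k$ that is constant along the normal to the hyperplane of $e_k$ --- the paper spends most of its proof constructing exactly this extension via the affine projection $Proj_{e_k}$ and verifying it stays polynomial. After that the arguments diverge: the paper exploits that $\varphi$ vanishes on all of $\partial T$, applies a Poincar\'e inequality to get $\|\varphi\|_T^2\leq Ch_T^2\|\nabla\varphi\|_T^2$, and then converts the volume integral of $|\nabla\varphi|^2$ into $h_T$ times a face integral over $e_k$ using the explicit form $\nabla\varphi=(v_0-v_b)(\nabla l_k)\varphi_{e_k}$ on $e_k$ together with $|\nabla l_k|\sim h_T^{-1}$; you instead bound the weight $|l_k\varphi_{e_k}|\leq C$ pointwise and reduce everything to the single inequality $\|(v_b-v_0)\|_{L^2(T)}^2\leq Ch_T\|v_b-v_0\|_{L^2(e_k)}^2$, proved by scaling to a unit-size element and invoking norm equivalence on the finite-dimensional space of normal-constant polynomial extensions (using that $\|\cdot\|_{L^2(\hat e_k)}$ is a genuine norm there, since a polynomial on the hyperplane vanishing on a set of positive $(d-1)$-measure vanishes identically). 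Your version is arguably cleaner: it avoids the paper's least transparent step, the passage from $\int_T|\nabla\varphi|^2$ to the face integral, which the paper justifies only loosely. The cost is that you must make the uniformity of the finite-dimensional equivalence constant over the shape-regular family of (possibly non-convex) elements explicit; you correctly flag this, and it is the same kind of ``domain inverse inequality'' hypothesis the paper itself invokes elsewhere, so it is not a gap relative to the paper's own standards.
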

\begin{proof}
To extend  $v_b$, originally defined on the $(d-1)$-dimensional edge/face  $e_k$, to the entire $d$-dimensional element $T$, we use the extension
$$
v_b (X)= v_b(Proj_{e_k} (X)),
$$
where $X$ is an arbitrary point in $T$, $Proj_{e_k} (X)$ represents the orthogonal projection of $X$ onto  the hyperplane $H\subset\mathbb R^d$  containing   $e_k$. 
If  $Proj_{e_k} (X)$ lies outside $e_k$, then $v_b(Proj_{e_k} (X))$ is defined as the natural extension of  
 $v_b$ from $e_k$ to $H$.

We assert that $v_b$ remains  a polynomial defined on the element $T$ following the extension. 

Consider the hyperplane $H$ that contains the edge/face   $e_k$, which is determined by $d-1$ linearly independent vectors $\beta_1, \cdots, \beta_{d-1}$ originating from a point $A$ on $e_k$. Any point $P$ on  $e_k$ can be parametrized as
$$
P(t_1, \cdots, t_{d-1})=A+t_1\beta_1+\cdots+t_{d-1}\beta_{d-1},
$$
where $t_1, \cdots, t_{d-1}$ are parameters.

Since  $v_b(P(t_1, \cdots, t_{d-1}))$ is a polynomial of degree $p$ defined on $e_k$, it can be expressed as
$$
v_b(P(t_1, \cdots, t_{d-1}))=\sum_{|\alpha|\leq p}c_{\alpha}\textbf{t}^{\alpha},
$$
where $\textbf{t}^{\alpha}=t_1^{\alpha_1}\cdots t_{d-1}^{\alpha_{d-1}}$ and $\alpha=(\alpha_1, \cdots, \alpha_{d-1})$  is a multi-index.

For any point $X$ in the element $T$, the projection of $X$  onto  the hyperplane $H$   is the point in $H$ that minimizes the distance to  $X$.  This projection, denoted as $Proj_{e_k} (X)$, is an affine transformation given by
$$
Proj_{e_k} (X)=A+\sum_{i=1}^{d-1} t_i(X)\beta_i,
$$
where  $A$ is the origin point on $e_k$,  and $t_i(X)$ are the projection coefficients determined by solving the orthogonality condition 
$$
(X-Proj_{e_k} (X))\cdot \beta_j=0,\qquad \forall j=1, \cdots, d-1.
$$
This yields a system of linear equations in $t_1(X)$, $\cdots$, $t_{d-1}(X)$, which can be explicitly solved, ensuring that
$$
t_i(X) \  \text{is a linear function of} \  X.
$$
Consequently, the projection  $Proj_{e_k} (X)$ is an affine linear function  of $X$.

To extend $v_b$ from $e_k$ to the entire element $T$, we define
$$
v_b(X)=v_b(Proj_{e_k} (X))=\sum_{|\alpha|\leq p}c_{\alpha}\textbf{t}(X)^{\alpha},
$$
where $\textbf{t}(X)^{\alpha}=t_1(X)^{\alpha_1}\cdots t_{d-1}(X)^{\alpha_{d-1}}$. Since each $t_i(X)$ is a linear function of $X$, it follows that  $\textbf{t}(X)^{\alpha}$ is a polynomial in $X=(x_1, \cdots, x_d)$, thereby confirming that  $v_b(X)$ remains a polynomial in $d$-dimensional coordinates.

Similarly, let $v_{trace}$ denote the trace of $v_0$ on $e_k$. We extend $v_{trace}$ to $T$  using  the formula 
$$
 v_{trace} (X)= v_{trace}(Proj_{e_k} (X)),
$$
For any $X$ in $T$, if $Proj_{e_k} (X)$ does not lie on $e_k$, then $v_{trace}(Proj_{e_k} (X))$ is defined as the extension of   $v_{trace}$ from $e_k$ to the hyperplane $H$. By a reasoning analogous to that for $v_b$, it follows that $v_{trace}$ remains a polynomial after this extension. 

Define $\varphi=(v_b-v_0)l_k \varphi_{e_k}$. Then,
\begin{equation*}
    \begin{split}
\|\varphi\|^2_T  =
\int_T \varphi^2dT \leq & Ch_T^2\int_T (\nabla\varphi)^2dT
\\ \leq & Ch_T^2  
 \int_T  (\nabla((v_b-v_{trace})(X)l_k  \varphi_{e_k}))^2dT\\
\leq &Ch_T^3 \int_{e_k} ( (v_b-v_{trace})(Proj_{e_k} (X))(\nabla l_k)  \varphi_{e_k})^2ds\\\leq &Ch_T \int_{e_k} (v_b-v_0)^2ds,
    \end{split}
\end{equation*} 
where we applied  Poincare inequality since $\varphi=0$ on  each $e_i$ for $i=1,\cdots,N$,   $\nabla \varphi =0$ on  each $e_i$ for $i \neq k$, $\nabla \varphi =(v_0-v_b)(\nabla l_k) \varphi_{e_k}=\mathcal{O}( \frac{ (v_0-v_b)\varphi_{e_k}}{h_T}\textbf{C})$  on   $e_k$ for some vector constant $\textbf{C}$, along with the properties of the projection.

This completes the proof of the lemma.

\end{proof}

\begin{lemma}\label{phi2}
For any $\{v_0,v_b, v_g\}\in V_h$, define $\varphi=(\kappa \nabla v_0\cdot \textbf{n}-v_{g })  \varphi_{e_k}$. The following inequality holds:
\begin{equation}
  \|\varphi\|_T ^2 \leq Ch_T \int_{e_k}(\kappa \nabla v_0\cdot \textbf{n}-v_{g })^2ds.
\end{equation}
\end{lemma}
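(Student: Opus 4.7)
The plan is to mirror the extension strategy of Lemma \ref{phi}, but replace its Poincar\'e step (which required $\varphi$ to vanish on all of $\partial T$) by a Fubini-based reduction to $e_k$. The reason is structural: the definition of $\varphi$ here lacks the factor $l_k$, so $\varphi$ does not vanish on $e_k$ and a direct Poincar\'e argument is unavailable.

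First I would use the orthogonal projection $Proj_{e_k}$ from Lemma \ref{phi} to extend both $(\kappa \nabla v_0\cdot\textbf{n})|_{e_k}$ and $v_g|_{e_k}$ from $e_k$ to all of $T$. The same polynomial-extension argument given there shows that, after this extension, $w:=\kappa\nabla v_0\cdot\textbf{n}-v_g$ is a polynomial on $T$, and by construction $w$ depends only on the in-plane coordinates along the hyperplane $H$ containing $e_k$; in particular, $w$ is constant in the direction normal to $H$ and $\varphi=w\,\varphi_{e_k}$ is a polynomial on $T$.

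Next I would exploit the product structure. After normalization $\varphi_{e_k}$ is uniformly bounded on $T$, so
\begin{equation*}
\|\varphi\|_T^2 \leq C\int_T w^2\, dT.
\end{equation*}
Parameterizing $T$ by coordinates $(y,z)$ with $y\in H$ and $z$ in the normal direction, shape regularity bounds the normal extent of $T$ by $Ch_T$, and Fubini's theorem combined with the fact that $w$ depends only on $y$ gives
\begin{equation*}
\int_T w^2\,dT \leq Ch_T\int_{\pi_H(T)} w^2\, d\sigma,
\end{equation*}
where $\pi_H(T)\subset H$ denotes the orthogonal projection of $T$ onto $H$.

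Finally, a scaling argument to a reference configuration reduces the surface integral over $\pi_H(T)$ to one over $e_k$. Since $w|_H$ is a polynomial of fixed degree, $e_k\subset\pi_H(T)$, and both sets have diameter $\sim h_T$ by shape regularity, the fact that a polynomial vanishing on $e_k$ (a set of positive $(d-1)$-measure in $H$) must vanish on all of $H$ makes $\|\cdot\|_{e_k}$ a genuine norm on this finite-dimensional space, and the standard equivalence of norms yields $\int_{\pi_H(T)} w^2\,d\sigma\leq C\int_{e_k} w^2\,ds$. Chaining the three displayed estimates produces the claim. The main obstacle is this last step: one must check, for general polygonal or polyhedral $T$, that the shape-regularity hypothesis on $\mathcal{T}_h$ provides uniform control of the constant $C$ in the polynomial norm equivalence, independently of the mesh size, so that the ratio of diameters of $\pi_H(T)$ and $e_k$ and the polynomial degree both remain bounded as $h\to 0$.
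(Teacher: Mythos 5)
Your proposal follows essentially the same route as the paper's proof: extend $\kappa\nabla v_0\cdot\textbf{n}-v_g$ from $e_k$ to $T$ via the orthogonal projection onto the hyperplane containing $e_k$, use that the extension is constant in the normal direction to reduce the volume integral to a face integral at the cost of a factor $h_T$, and compare the result to the integral over $e_k$. Your write-up simply makes explicit the Fubini and polynomial norm-equivalence steps that the paper's chain of inequalities leaves implicit, and your closing concern about uniformity of the norm-equivalence constant is handled by the shape-regularity assumption the paper imports from \cite{wy3655}.
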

\begin{proof}
We begin by extending $v_g$, originally defined on the $(d-1)$-dimensional edge/face $e_k$, to the entire $d$-dimensional polytopal element $T$ using the formula:
$$
 v_g (X)= v_g(Proj_{e_k} (X)),
$$
where $X$ is any point in $T$, and $Proj_{e_k} (X)$ denotes the orthogonal projection of the point $X$ onto the hyperplane $H$ containing     $e_k$. If $Proj_{e_k} (X)$ does not lie on $e_k$, we define 
 $v_g(Proj_{e_k} (X))$ as the extension of $v_g$ from $e_k$ to $H$.

We assert that after this extension, $v_g$ remains a polynomial on $T$, which follows from the same reasoning as in Lemma \ref{phi}.

Next, let  $v_{trace}$ be the trace of $v_0$  on    $e_k$. We extend $v_{trace}$   to   $T$  using the formula:
$$
 v_{trace} (X)= v_{trace}(Proj_{e_k} (X)).
$$
Again, if $Proj_{e_k} (X)$ is not on  $e_k$, we define $v_{trace}(Proj_{e_k} (X))$ as the extension of $v_{trace}$ from $e_k$ to   $H$. It follows from Lemma \ref{phi} that 
  $v_{trace}$ remains a polynomial after this extension.  

Now, setting $\varphi=(\kappa \nabla v_0\cdot \textbf{n}-v_{g })  \varphi_{e_k}$, we obtain:
\begin{equation*}
    \begin{split}
\|\varphi\|^2_T  =
\int_T \varphi^2dT =  &\int_T ((\kappa \nabla v_0\cdot \textbf{n}-v_{g })(X)  \varphi_{e_k})^2dT\\
\leq &Ch_T \int_{e_k} ((\kappa \nabla v_{trace}\cdot \textbf{n}-v_{g })(Proj_{e_k} (X))   \varphi_{e_k})^2dT\\ 
\\\leq &Ch_T \int_{e_k} (\kappa \nabla v_0\cdot \textbf{n}-v_{g })^2ds,
    \end{split}
\end{equation*} 
where we used the facts that (1) $\varphi_{e_k}=0$ on each  edge/face  $e_i$ for $i \neq k$,  (2) there exists a subdomain $\widehat{e_k}\subset e_k$ where $\varphi_{e_k} \geq \rho_1$ for some constant $\rho_1>0$, 
and applied the properties of the projection.

 This completes the proof.
\end{proof}

\begin{lemma}\label{normeqva} There exist two positive constants,  $C_1$ and $C_2$, such that for every $v=\{v_0, v_b, v_g\} \in V_h$, the following norm equivalence holds:
 \begin{equation}\label{normeq}
 C_1\|v\|_{2, h}\leq \3bar v\3bar  \leq C_2\|v\|_{2, h}.
\end{equation}
\end{lemma}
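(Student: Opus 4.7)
The plan is to prove the two inequalities in \eqref{normeq} separately. The upper bound is a fairly direct consequence of the integration-by-parts formulas \eqref{A.002} and \eqref{2.4-3}, while the lower bound requires the bubble-function machinery developed in Lemmas \ref{norm1}--\ref{phi2} to extract the two boundary jump terms $h_T^{-3}\|v_0-v_b\|_{\partial T}^2$ and $h_T^{-1}\|\kappa\nabla v_0\cdot\bn-v_g\|_{\partial T}^2$ from $\3bar v\3bar$.

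For the upper bound, I would set $\varphi = E_w v \in P_{r_1}(T)$ in \eqref{A.002} and apply Cauchy--Schwarz, the trace inequality \eqref{trace-inequality}, and the inverse inequality \eqref{x}. The polynomial inverse estimates give $\|E_w v\|_{\partial T}\le Ch_T^{-1/2}\|E_w v\|_T$ and $\|\nabla E_w v\|_{\partial T}\le Ch_T^{-3/2}\|E_w v\|_T$, so after dividing out a factor of $\|E_w v\|_T$ one obtains
\begin{equation*}
\|E_w v\|_T \le C\Bigl(\|E v_0\|_T + h_T^{-3/2}\|v_0-v_b\|_{\partial T} + h_T^{-1/2}\|\kappa\nabla v_0\cdot\bn-v_g\|_{\partial T}\Bigr).
\end{equation*}
Analogously, choosing $\boldsymbol{\psi} = \nabla_w v$ in \eqref{2.4-3} yields $\|\nabla_w v\|_T \le C(\|\nabla v_0\|_T + h_T^{-1/2}\|v_0-v_b\|_{\partial T})$. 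Squaring, summing over $T\in\T_h$, and using the $L^2$ part $\mu^2\|v_0\|_T^2$ directly gives $\3bar v\3bar^2 \le C_2^2\|v\|_{2,h}^2$.

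For the lower bound, Lemmas \ref{norm1} and \ref{norm1-1} immediately control $\|E v_0\|_T$ and $\|\nabla v_0\|_T$ by $\|E_w v\|_T$ and $\|\nabla_w v\|_T$ respectively, so the real work is estimating the two boundary terms. For the jump $v_0-v_b$ on a face $e_k$, I would test \eqref{A.002} with $\varphi = (v_0-v_b)\, l_k\, \varphi_{e_k}$ (using the polynomial extension of $v_b$ from Lemma \ref{phi}). Since $l_k=0$ on $e_k$ and $\varphi_{e_k}=0$ on the other faces, $\varphi$ vanishes on $\partial T$, and moreover $\nabla\varphi$ vanishes on $e_i$ for $i\neq k$ (since both $\varphi_{e_k}$ and $\nabla\varphi_{e_k}$ vanish there). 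On $e_k$ one has $\nabla\varphi = (v_0-v_b)\varphi_{e_k}\nabla l_k$ with $\nabla l_k = \bn_k/h_T$, so positive definiteness of $\kappa$ combined with a domain inverse inequality for the polynomial $(v_0-v_b)^2$ gives
\begin{equation*}
\langle v_0-v_b,\kappa\nabla\varphi\cdot\bn\rangle_{\partial T} \ge C h_T^{-1}\|v_0-v_b\|_{e_k}^2.
\end{equation*}
Identity \eqref{A.002} then reads $(E_w v - E v_0,\varphi)_T = \langle v_0-v_b,\kappa\nabla\varphi\cdot\bn\rangle_{\partial T}$, and bounding the left-hand side by Cauchy--Schwarz, Lemma \ref{norm1}, and the $L^2$ estimate $\|\varphi\|_T\le Ch_T^{1/2}\|v_0-v_b\|_{e_k}$ from Lemma \ref{phi} yields $\|v_0-v_b\|_{e_k}\le Ch_T^{3/2}\|E_w v\|_T$, i.e.\ $h_T^{-3}\|v_0-v_b\|_{e_k}^2 \le C\|E_w v\|_T^2$.

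For the flux jump, I would test \eqref{A.002} with $\varphi = (\kappa\nabla v_0\cdot\bn - v_g)\varphi_{e_k}$ (polynomially extended via Lemma \ref{phi2}). Again $\varphi$ and $\nabla\varphi$ vanish on $e_i$ for $i\neq k$, so \eqref{A.002} becomes
\begin{equation*}
\int_{e_k}(\kappa\nabla v_0\cdot\bn-v_g)^2\varphi_{e_k}\,ds = (Ev_0-E_w v,\varphi)_T + \langle v_0-v_b,\kappa\nabla\varphi\cdot\bn\rangle_{e_k}.
\end{equation*}
The left side is bounded below by $C\|\kappa\nabla v_0\cdot\bn-v_g\|_{e_k}^2$ via the bubble-weighted domain inverse inequality; the first term on the right is estimated by $Ch_T^{1/2}\|E_w v\|_T\|\kappa\nabla v_0\cdot\bn-v_g\|_{e_k}$ using Lemmas \ref{norm1} and \ref{phi2}; and for the second term, the polynomial inverse/trace argument gives $\|\kappa\nabla\varphi\cdot\bn\|_{e_k}\le Ch_T^{-1}\|\kappa\nabla v_0\cdot\bn-v_g\|_{e_k}$, so that term is controlled by $Ch_T^{-1}\|v_0-v_b\|_{e_k}\|\kappa\nabla v_0\cdot\bn-v_g\|_{e_k}$. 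Dividing out and squaring yields $h_T^{-1}\|\kappa\nabla v_0\cdot\bn-v_g\|_{e_k}^2 \le C(\|E_w v\|_T^2 + h_T^{-3}\|v_0-v_b\|_{e_k}^2)$, and the already-established bound on the $v_0-v_b$ jump absorbs the last term. Summing over $T$ and $e_k$ closes the lower bound. The main obstacle will be the second step: carefully verifying that the extensions from Lemmas \ref{phi}--\ref{phi2} keep $\varphi$ polynomial (so that polynomial inverse/trace estimates apply) while simultaneously handling the coupling between the two jump terms without circular reasoning.
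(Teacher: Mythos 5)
Your proposal follows essentially the same route as the paper's proof: the upper bound by testing \eqref{A.002} and \eqref{2.4-3} with $E_w v$ and $\nabla_w v$ and invoking the trace and inverse inequalities, and the lower bound by testing with the edge-bubble functions $(v_0-v_b)\,l_k\,\varphi_{e_k}$ and $(\kappa\nabla v_0\cdot\bn-v_g)\,\varphi_{e_k}$ together with Lemmas \ref{norm1}--\ref{phi2}, establishing the $v_0-v_b$ jump bound first so that the flux-jump estimate can absorb the coupling term. The argument is correct and matches the paper's proof in all essential steps.
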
 

\begin{proof}    We begin by recalling that the edge/face-based bubble function is given by
$$\varphi_{e_k}= \Pi_{i=1, \cdots, N, i\neq k}l_i^2(x).$$

First, we extend the function $v_b$, originally defined on the edge/face $e_k$, to the entire element $T$. 
Similarly, let  $v_{trace}$ denote the trace of $v_0$ on  $e_k$; we extend $v_{trace}$ to  $T$ as well.  For notational convenience, these extensions are still denoted by  $v_b$ and $v_0$. (Details of these extensions appear in Lemma \ref{phi}.)
By substituting $$\varphi=(v_b-v_0)l_k \varphi_{e_k}$$ into   equation \eqref{A.002}, we obtain 
\begin{equation} \label{t33}
\begin{split}
 & (E_{w}v,
\varphi)_T\\=&(Ev_0,\varphi)_T+\langle
v_0-v_b, \kappa\nabla\varphi\cdot \textbf{n} \rangle_{\partial T}-\langle
 \kappa \nabla v_0\cdot \textbf{n}-v_{g },\varphi  \rangle_{\partial T} \\
 =&(Ev_0,\varphi)_T+    \int_{e_k} |v_b-  v_0|^2   \kappa (\nabla l_k) \varphi_{e_k}\cdot \bn ds\\
  =&(Ev_0,\varphi)_T+ Ch_T^{-1}   \int_{e_k} |v_b-  v_0|^2   \kappa  \varphi_{e_k}  ds,
\end{split}
\end{equation}
   where we have used that $\varphi=0$ on each edge/face $e_i$ for $i=1, \cdots, N$, that $\nabla \varphi   =0$ on each edge/face $e_i$ with $i \neq k$ and that   on   $e_k$, the gradient satisfies $$\nabla \varphi =(v_0-v_b)(\nabla l_k) \varphi_{e_k}=\mathcal{O}( \frac{ (v_0-v_b)\varphi_{e_k}}{h_T}\textbf{C})$$  with $\textbf{C}$ a constant vector.

 Recall additionally that: (1) $\varphi_{e_k}=0$ on every edge/face  $e_i$ for $i \neq k$, (2) there exists a subdomain $\widehat{e_k}\subset e_k$ such that  $\varphi_{e_k} \geq \rho_1$ for some constant $\rho_1>0$.
By invoking the Cauchy–Schwarz inequality and a domain inverse inequality (see \cite{wy3655}), together with \eqref{t33} and the result of Lemma \ref{phi}, we deduce
\begin{equation*}
\begin{split}
  \int_{e_k}|v_b-  v_0|^2  ds\leq & C \int_{e_k} |v_b-  v_0|^2\kappa\varphi_{e_k}   ds 
  \\ \leq& C h_T (\|E_{w} v\|_T+\|E v_0\|_T){ \| \varphi\|_T}\\
 \leq & C { h_T^{\frac{3}{2}}} (\|E_{w} v\|_T+\|E v_0\|_T){ (\int_{e_k}|v_b- v_0|^2ds)^{\frac{1}{2}}},
 \end{split}
\end{equation*}
which, in conjunction with Lemma \ref{norm1}, implies that
\begin{equation}\label{t21}
 h_T^{-3}\int_{e_k}|v_b-  v_0|^2  ds \leq C  (\|E_{w} v\|^2_T+\|E v_0\|^2_T)\leq C\|E_{w} v\|^2_T.   
\end{equation}

Next, we extend $v_g$ from $e_k$ to $T$;  
 for simplicity, the extension is still denoted by  $v_g$ (see Lemma \ref{phi2} for details). 
Letting $\varphi=(\kappa \nabla v_0\cdot \textbf{n}-v_{g })  \varphi_{e_k}$ in \eqref{A.002} yields
\begin{equation*} 
\begin{split}
    &(E_{w}v,
\varphi)_T\\=&(Ev_0,\varphi)_T+\langle
v_0-v_b, \kappa\nabla\varphi\cdot \textbf{n} \rangle_{\partial T}-\langle
 \kappa \nabla v_0\cdot \textbf{n}-v_{g },\varphi  \rangle_{\partial T}\\=&(Ev_0,\varphi)_T+\langle
v_0-v_b, \kappa\nabla\varphi\cdot \textbf{n} \rangle_{\partial T}-\int_{e_k} ( \kappa \nabla v_0\cdot \textbf{n}-v_{g } )^2   \varphi_{e_k}ds
\end{split}
\end{equation*}
where we have used that  $\varphi_{e_k} =0$ on every edge/face $e_i$ with  $i \neq k$, 
and the fact that there exists a subdomain $\widehat{e_k}\subset e_k$ for which  $\varphi_{e_k} \geq \rho_1>0$.
By applying the Cauchy–Schwarz inequality, the domain inverse inequality (see \cite{wy3655}), the inverse inequality, and the trace inequality \eqref{x}, together with \eqref{t21} and Lemma \ref{phi2}, we obtain
 \begin{equation*} 
\begin{split}
&  \int_{e_k}( \kappa \nabla v_0\cdot \textbf{n}-v_{g } )^2  ds\\\leq &C   \int_{e_k}( \kappa \nabla v_0\cdot \textbf{n}-v_{g } )^2   \varphi_{e_k}ds \\
  \leq & C (\|E_{w} v\|_T+\|E v_0\|_T)\| \varphi\|_T+  C\|v_0-v_b\|_{\partial T}\|\kappa \nabla \varphi\cdot\bn\|_{\partial T}\\
 \leq & C h_T^{\frac{1}{2}} (\|E_{w} v\|_T+\|E v_0\|_T)(\int_{e_k} (\kappa \nabla v_0\cdot \textbf{n}-v_{g } )^2ds)^{\frac{1}{2}}\\& + C h_T^{\frac{3}{2}}  \|E_{w} v\|_T  h_T^{-\frac{1}{2}}h_T^{-1}h_T^{\frac{1}{2}}(\int_{e_k}(\kappa \nabla v_0\cdot \textbf{n}-v_{g } )^2ds)^{\frac{1}{2}}. 
 \end{split}
\end{equation*} 
This, together with Lemma \ref{norm1}, gives 
\begin{equation} \label{t11}
 h_T^{-1}\int_{e_k}( \kappa \nabla v_0\cdot \textbf{n}-v_{g } )^2  ds \leq C  (\|E_{w} v\|^2_T+\|E v_0\|^2_T)\leq C\|E_{w} v\|^2_T.
\end{equation}
  By combining Lemmas \ref{norm1}–\ref{norm1-1} with estimates \eqref{t21} and \eqref{t11}, and by employing the norm definitions \eqref{3barnorm} and \eqref{disnorm}, we deduce that
  $$
 C_1\|v\|_{2, h}\leq \3bar v\3bar.
$$
 
For the reverse inequality, we apply Cauchy-Schwarz inequality, the inverse inequality, and  the trace inequality \eqref{x} to  \eqref{A.002}   to obtain
\begin{equation*}
    \begin{split}
  \Big| (E_{w}v, \varphi)_T\Big| \leq &\|E v_0\|_T \|  \varphi\|_T+
 \|v_b-v_0\|_{\partial T} \| \kappa\nabla\varphi \cdot\bn \|_{\partial T}+\|  \kappa \nabla v_0\cdot \textbf{n}-v_{g }  \|_{\partial T} \|\varphi \|_{\partial T} \\
 \leq &\|E v_0\|_T \|  \varphi\|_T+
 h_T^{-\frac{3}{2}}\|v_b-v_0\|_{\partial T} \|  \varphi\|_{  T}+h_T^{-\frac{1}{2}}\|  \kappa \nabla v_0\cdot \textbf{n}-v_{g }  \|_{\partial T} \|\varphi  \|_{T}.
    \end{split}
\end{equation*}
This implies that
$$
\| E_{w}v\|_T^2\leq C( \|E v_0\|^2_T  +
 h_T^{-3}\|v_b-v_0\|^2_{\partial T}+h_T^{-1}\| \kappa \nabla v_0\cdot \textbf{n}-v_{g }\|^2_{\partial T}),
$$
which, combined with Lemma \ref{norm1-1}, gives 
$$ \3bar v\3bar  \leq C_2\|v\|_{2, h}.$$

The two inequalities together establish the desired norm equivalence, thereby completing the proof. 
 \end{proof}

\begin{theorem}
 The WG Algorithm \ref{PDWG1} admits a unique solution. 
\end{theorem}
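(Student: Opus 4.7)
The plan is to reduce the theorem to a uniqueness argument and then exploit the norm equivalence established in Lemma \ref{normeqva}. Since the WG scheme \eqref{PDWG1} defines a square linear system on the finite-dimensional space $V_h$, existence and uniqueness are equivalent. It therefore suffices to show that if $f \equiv 0$, $\xi \equiv 0$, and $\nu \equiv 0$, then the only solution of \eqref{PDWG1} is $u_h = 0$.

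First I would test \eqref{PDWG1} with $v = u_h \in V_h^0$. By the definition \eqref{3barnorm} this yields $\3bar u_h \3bar = 0$, and an application of Lemma \ref{normeqva} converts this into $\|u_h\|_{2,h} = 0$. Unpacking the definition \eqref{disnorm} of the discrete $H^2$ semi-norm, I would conclude that for every $T \in \T_h$ one has (i) $E u_0 = 0$ on $T$, (ii) $u_0 = u_b$ on $\partial T$, and (iii) $\kappa \nabla u_0 \cdot \bn = u_g$ on $\partial T$.

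Next I would assemble these element-wise relations into a global statement. Since $u_b$ is single-valued on each interior edge or face, (ii) forces $u_0$ to be continuous across interior faces, and together with $u_b = 0$ on $\partial\Omega$ this yields $u_0 \in H^1_0(\Omega)$. The antisymmetric sign convention $u_g^L + u_g^R = 0$ on interior faces, combined with (iii), forces $\kappa \nabla u_0 \cdot \bn$ to be continuous across interior faces, so $\kappa \nabla u_0 \in H(\operatorname{div}; \Omega)$ with $\operatorname{div}(\kappa \nabla u_0) = 0$ in $L^2(\Omega)$ by (i). Standard elliptic uniqueness for the homogeneous Dirichlet problem, obtained from the energy identity $(\kappa \nabla u_0, \nabla u_0) = 0$ together with the SPD assumption on $\kappa$, then forces $u_0 \equiv 0$, and substituting this back into (ii) and (iii) gives $u_b = 0$ and $u_g = 0$, hence $u_h = 0$.

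Most of the work has already been done in Lemma \ref{normeqva}; once $\|\cdot\|_{2,h}$ and $\3bar\,\cdot\,\3bar$ are known to be equivalent, the present theorem is essentially a direct corollary. The only delicate step is the global assembly in the third paragraph, which relies crucially on the single-valuedness of $u_b$ across interior faces and on the antisymmetric convention $u_g^L + u_g^R = 0$ built into the definition of $V_h$; no genuine analytic obstacle remains beyond what is already encapsulated in Lemma \ref{normeqva}.
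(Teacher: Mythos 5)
Your proof is correct and follows the same main line as the paper's: test the homogeneous system with the solution itself (equivalently, with the difference of two solutions), conclude $\3bar u_h\3bar=0$, and invoke the norm equivalence of Lemma \ref{normeqva} to obtain $\|u_h\|_{2,h}=0$. Where you genuinely diverge is in the endgame. The paper reads off from $\|\eta_h\|_{2,h}=0$ the three element-wise identities $E\eta_0=0$, $\nabla\eta_0=0$ and $\eta_0=0$ --- the latter two coming from the terms $2\mu(\kappa\nabla v_0,\nabla v_0)_T$ and $\mu^2(v_0,v_0)_T$ in \eqref{disnorm} --- and then propagates vanishing to $\eta_b$ and $\eta_g$ through the trace-matching terms. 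You instead use only $Eu_0=0$ together with the trace-matching conditions, assemble $u_0$ into a global $H^1_0(\Omega)$ function with $\kappa\nabla u_0\in H(\operatorname{div};\Omega)$ and $\nabla\cdot(\kappa\nabla u_0)=0$, and conclude via the elliptic energy identity; the sign convention $u_g^L+u_g^R=0$ is indeed exactly what makes the normal fluxes match across interior faces. Your route is slightly longer but strictly more robust: the paper's deduction of $\nabla\eta_0=0$ and $\eta_0=0$ from the discrete norm silently requires $\mu>0$ on each element, whereas $\mu$ is only assumed nonnegative, so your argument also covers the case $\mu\equiv 0$. You additionally make explicit the (standard, but unstated in the paper) observation that for a square finite-dimensional linear system uniqueness implies existence, which is needed since the theorem asserts solvability and not merely uniqueness.
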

\begin{proof}
Suppose   $u_h^{(1)}\in V_h$ and $u_h^{(2)}\in V_h$ are two distinct solutions of the WG Algorithm \ref{PDWG1}. Define their difference as  $\eta_h= u_h^{(1)}-u_h^{(2)}$. Then,  $\eta_h\in V_h^0$ satisfies 
$$\sum_{T\in {\cal T}_h}(E_{w} \eta_h,E_{w}v )_T +2\mu (\kappa \nabla_w \eta_h,\nabla_w v )_T +\mu^2(\eta_0,v_0)_T  =0, \quad\forall v\in V_h^0. $$ 
Choosing $v=\eta_h$ in the above equation yields $\3bar \eta_h\3bar=0$. By the norm equivalence \eqref{normeq}, we obtain $\|\eta_h\|_{2,h}=0$, which implies that on each element $T$, 
$$E \eta_0=0,  \quad\nabla \eta_0=0, \quad\eta_0=0,$$ along with the conditions $\eta_0=\eta_b$ and $\kappa \nabla \eta_0 \cdot\bn=\eta_g$ on  $\partial T$. 
Consequently,  $\eta_0$ must be a constant within each element  $T$, which, together with the fact that $ \eta_0=\eta_b$ on each
$\partial T$,   indicates that $\eta_0$ is a constant across the entire domain $\Omega$.  Furthermore, since $ \eta_0=\eta_b$ on
$\partial T$ and $\eta_b|_{\partial T\cap \partial\Omega}=0$,  we conclude that $\eta_0=0$ in $\Omega$. This, together with $ \eta_0=\eta_b$ on 
$\partial T$, implies $\eta_b=0$ in $\Omega$. Similarly, using the boundary condition $ \kappa \nabla
\eta_0 \cdot \textbf{n}= \eta_g$  on  each $\partial T$ and  $\eta_g|_{\partial T\cap \partial\Omega}=0$,  we deduce that   $\eta_g=0$ in $\Omega$. Therefore, $\eta_h=0$ in $\Omega$, which implies $u_h^{(1)}\equiv u_h^{(2)}$.  

This establishes the uniqueness of the solution, completing the proof.
\end{proof}

\section{Technical Results} 
This section aims to establish essential technical results related to $L^2$ projections, which play a crucial role in the error analysis of the  WG   method.

 For each element  $T\in {\cal T}_h$, let 
  $Q^{r_1}$ and  $Q^{r_2}$  be the $L^2$ projection operators onto the finite element spaces consisting of piecewise polynomials of degree at most  
 $r_1$ and $r_2$, respectively.

\begin{lemma}\label{Lemma5.1}   The following properties hold:
\begin{equation}\label{pro1}
E_{w}v =Q^{r_1}(E v), \qquad \forall v\in H_{\kappa}^2(T),
\end{equation}
\begin{equation}\label{pro2}
\nabla_{w}v =Q^{r_2}(\nabla v), \qquad \forall v\in H^1(T).
\end{equation}\end{lemma}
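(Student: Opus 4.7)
The plan is to apply the integration-by-parts forms \eqref{A.002} and \eqref{2.4-3} of the two discrete weak operators to the classical function $v$, viewed as a weak function whose interior value, boundary value, and boundary flux are all traces of $v$ itself. In that situation the boundary jump terms collapse, the defining identity reduces to the testing identity of the relevant $L^2$ projection, and uniqueness inside $P_{r_1}(T)$ (resp.\ $[P_{r_2}(T)]^d$) closes the argument.

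For \eqref{pro1}, given $v\in H_{\kappa}^2(T)$ I would associate to it the weak function $\{v_0,v_b,v_g\}=\{v,v|_{\partial T},\kappa\nabla v\cdot\bn|_{\partial T}\}$. Because $v_0$ and $v_b$ share the same trace and $v_g$ is exactly the normal flux of $\kappa\nabla v_0$, the two jumps $v_0-v_b$ and $\kappa\nabla v_0\cdot\bn-v_g$ vanish on $\partial T$. Substituting into \eqref{A.002} eliminates both boundary integrals, leaving $(E_{w}v,\varphi)_T=(Ev,\varphi)_T$ for every $\varphi\in P_{r_1}(T)$. The right-hand side equals $(Q^{r_1}(Ev),\varphi)_T$ by the definition of the $L^2$ projection, so the polynomial $E_{w}v\in P_{r_1}(T)$ and the polynomial $Q^{r_1}(Ev)\in P_{r_1}(T)$ are tested identically against all of $P_{r_1}(T)$; uniqueness of the projection forces $E_{w}v=Q^{r_1}(Ev)$.

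For \eqref{pro2} the argument is the same template, one order lower. Given $v\in H^1(T)$ I take the weak function with $v_0=v$ and $v_b=v|_{\partial T}$. Then $v_0-v_b=0$ on $\partial T$, so \eqref{2.4-3} reduces to $(\nabla_{w}v,\boldsymbol{\psi})_T=(\nabla v,\boldsymbol{\psi})_T$ for all $\boldsymbol{\psi}\in[P_{r_2}(T)]^d$. Since $\nabla_{w}v\in[P_{r_2}(T)]^d$ and the right-hand side coincides with $(Q^{r_2}(\nabla v),\boldsymbol{\psi})_T$, the same uniqueness argument yields $\nabla_{w}v=Q^{r_2}(\nabla v)$.

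There is no real obstacle; the lemma is essentially a bookkeeping check that the definitions of $E_{w}$ and $\nabla_{w}$ are consistent with classical differentiation when no genuine weakness is present. The only point requiring a line of care is the implicit identification of a classical $H^2_{\kappa}$ (resp.\ $H^1$) function with the corresponding weak triplet whose boundary data are its own traces, so that the jumps in \eqref{A.002} and \eqref{2.4-3} truly vanish; after that, both statements are immediate consequences of the variational characterization of the $L^2$ projection onto a polynomial space.
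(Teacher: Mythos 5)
Your proposal is correct and follows essentially the same route as the paper: both identify the classical function with the weak triplet whose boundary data are its own traces, observe that the jump terms in \eqref{A.002} and \eqref{2.4-3} vanish, and conclude by the variational characterization of the $L^2$ projection onto $P_{r_1}(T)$ (resp.\ $[P_{r_2}(T)]^d$). No substantive differences to report.
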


\begin{proof} 
 For any $u\in H_{\kappa}^2(T)$, applying equation \eqref{A.002} yields
\begin{equation*} 
\begin{split}
 & (E_{w}v,
\varphi)_T\\ =&(Ev,\varphi)_T+\langle
v|_T-v|_{\partial T}, \kappa\nabla\varphi\cdot \textbf{n} \rangle_{\partial T}-\langle
 \kappa \nabla v|_T\cdot \textbf{n}-( \kappa \nabla v \cdot \textbf{n})|_{\partial T},\varphi  \rangle_{\partial T}\\
  =&(Ev,\varphi)_T=(Q^{r_1} Ev, \varphi)_T,
\end{split}
\end{equation*}
for all $\varphi \in
 P_{r_1}(T)$.  

Similarly, for any $u\in H^1(T)$,  using equation \eqref{2.4-3}, we obtain
 \begin{equation*} 
  ( \nabla_{ w}v,\boldsymbol{ \psi})_T= (\nabla v,  \boldsymbol{ \psi})_T-
 \langle v|_T-v|_{\partial T},\boldsymbol{ \psi}\cdot  \textbf{n}\rangle_{\partial T}= (\nabla v,  \boldsymbol{ \psi})_T=(Q^{r_2}\nabla v,  \boldsymbol{ \psi})_T
 \end{equation*}
for all $\boldsymbol{ \psi}\in
 [P_{r_2}(T)]^d$.

 This  completes the proof of this lemma.
  \end{proof}

\begin{lemma}\label{Lemma5.2}\cite{wangft, wy3655}  Let ${\cal T}_h$ be a finite element partition of $\Omega$ satisfying the shape regularity assumption as defined in \cite{wy3655}. Then, for any $0\leq s\leq 2$, $0\leq m\leq k$, $1\leq n\leq r_1$, and $0\leq q\leq r_2$, there exists a constant $C$ such that the following estimates hold:
\begin{equation}\label{3.2}
\sum_{T\in {\cal T}_h}h_T^{2s}\|u-Q_0u\|^2_{s,T}\leq Ch^{2(m+1)}\|u\|_{m+1}^2,
\end{equation}
\begin{equation}\label{3.3}
\sum_{T\in {\cal
T}_h}h_T^{2s}\|Eu-Q^{r_1}Eu\|^2_{s,T}\leq Ch^{2(n-1)}\|u\|_{n+1}^2,
\end{equation}
\begin{equation}\label{3.2-2}
\sum_{T\in {\cal
T}_h} h_T^{2s} \| \kappa\nabla u -   Q^{r_2}(\kappa  \nabla u )\|^2_{s,T}\leq Ch^{2q}\|u\|_{q+1}^2.
\end{equation}
\end{lemma}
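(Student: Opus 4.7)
The plan is to prove each of the three estimates elementwise and then sum over $T\in\T_h$. All three follow from the same standard mechanism: Bramble--Hilbert (or Deny--Lions) applied to the polynomial-reproducing $L^2$-projection on a reference configuration, combined with an affine/scaling argument justified by the shape-regularity hypothesis from \cite{wy3655}.

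For \eqref{3.2}, I fix $T\in\T_h$ and recall that $Q_0$ is the $L^2$-projection onto $P_k(T)$, so it reproduces every polynomial of degree $\le m$ for any $0\le m\le k$. A standard Bramble--Hilbert argument, executed on a shape-regular reference configuration and scaled back to $T$, yields the elementwise bound
\begin{equation*}
\|u-Q_0u\|_{s,T} \;\le\; C\, h_T^{m+1-s}\,|u|_{m+1,T}.
\end{equation*}
Squaring, multiplying by $h_T^{2s}$, and summing over $T\in\T_h$ then gives \eqref{3.2} directly.

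Estimates \eqref{3.3} and \eqref{3.2-2} are obtained by applying the same procedure to the auxiliary quantities $Eu$ and $\kappa\nabla u$ in place of $u$. Because $\kappa$ is piecewise constant, no derivatives of the coefficients are generated, and one has the elementwise regularity transfer $|Eu|_{n-1,T}\le C\|u\|_{n+1,T}$ and $|\kappa\nabla u|_{q,T}\le C\|u\|_{q+1,T}$. The corresponding Bramble--Hilbert bounds for the $L^2$-projections onto $P_{r_1}(T)$ and $[P_{r_2}(T)]^d$ take the form
\begin{equation*}
\|Eu-Q^{r_1}Eu\|_{s,T}\le C\,h_T^{n-1-s}\|u\|_{n+1,T},\qquad \|\kappa\nabla u-Q^{r_2}(\kappa\nabla u)\|_{s,T}\le C\,h_T^{q-s}\|u\|_{q+1,T},
\end{equation*}
in the appropriate regimes $s\le n-1$ and $s\le q$. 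Multiplying by $h_T^{2s}$, squaring, and summing over $T\in\T_h$ produces \eqref{3.3} and \eqref{3.2-2}.

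The one non-routine ingredient is the validity of the Bramble--Hilbert estimate on the shape-regular but possibly non-convex polytopal elements admitted here, since one cannot in general map such an element to a single affine reference element. The standard remedy, followed in \cite{wy3655}, is to subdivide $T$ into a uniformly bounded (in terms of the shape-regularity constant) number of shape-regular simplices, apply the classical estimate on each piece, and reassemble; a Stein-type extension to a surrounding ball is an alternative. Since the lemma is quoted verbatim from \cite{wangft, wy3655}, I would simply invoke their construction and not reproduce the geometric bookkeeping.
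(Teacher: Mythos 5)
The paper does not actually prove Lemma \ref{Lemma5.2}; it is quoted without proof from \cite{wangft, wy3655}, so there is no in-paper argument to compare against. Your elementwise Bramble--Hilbert plus scaling argument, using polynomial reproduction of the $L^2$ projections and the regularity transfers $|Eu|_{n-1,T}\le C\|u\|_{n+1,T}$ and $|\kappa\nabla u|_{q,T}\le C\|u\|_{q+1,T}$ (valid since $\kappa$ is piecewise constant), is exactly the standard mechanism behind the cited results, and your remark about handling non-convex polytopal elements via a shape-regular simplicial subdivision is the right way to justify Bramble--Hilbert in this setting. The one caveat worth making explicit is the regime $s>m$ (respectively $s>n-1$, $s>q$), which the stated range $0\le s\le 2$ permits: there the error cannot be bounded by a lower-order seminorm of the target alone, and one must either assume extra regularity of $u$ or split off the polynomial part and use an inverse estimate; you flag this only implicitly with ``in the appropriate regimes,'' and it is precisely this issue that produces the extra term $h\,\delta_{r_1,0}\|u\|_4$ in \eqref{3.6} of Lemma \ref{Lemma5.3}. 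With that qualification made explicit, your proposal is a correct and complete justification of the lemma.
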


\begin{lemma}\label{Lemma5.3} \cite{wangft} Let $0\leq m\leq k$, $1\leq n\leq r_1$, $0\leq q\leq r_2$, and $u\in
H^{\max\{n+1,4\}}(\Omega)$. There exists a constant $C$ such that
the following estimates hold true:
\begin{equation}\label{3.5}
\Big(\sum_{T\in {\cal
T}_h} h_T\|Eu-Q^{r_1}(Eu)\|_{\partial T}^2\Big)^{\frac{1}{2}}\leq
Ch^{n-1}\|u\|_{n+1},
\end{equation}
\begin{equation}\label{3.6} 
\Big(\sum_{T\in {\cal
T}_h} h_T^3\|\kappa \nabla(E
 u-Q^{r_1}(Eu)) \cdot \textbf{n}\|_{\partial T}^2\Big)^{\frac{1}{2}}\\
\leq Ch^{n-1}(\|u\|_{n+1}+h\delta_{r_1, 0}\|u\|_4), 
\end{equation}
\begin{equation}\label{3.7}
\Big(\sum_{T\in {\cal T}_h}h_T^{-1}\| \kappa\nabla (Q_0u)\cdot \textbf{n}-Q_g(\kappa \nabla
u\cdot \textbf{n})\|_{\partial T}^2\Big)^{\frac{1}{2}}\leq Ch^{m-1}\|u\|_{m+1},
\end{equation}
\begin{equation}\label{3.8}
\Big(\sum_{T\in {\cal T}_h}h_T^{-3}\| 
  Q_0u - Q_bu\|_{\partial T}^2\Big)^{\frac{1}{2}}\leq
 Ch^{m-1}\|u\|_{m+1},
\end{equation}
\begin{equation}\label{3.8-2}
\Big(\sum_{T\in {\cal T}_h} h_T^3\|(\kappa\nabla u -  Q^{r_2} (\kappa  \nabla u ))\cdot \bn \|_{\partial T}^2\Big)
^{\frac{1}{2}}\leq
 Ch^{q+1}\|u\|_{q+1}.
\end{equation}
Here $\delta_{i,j}$ is the usual Kronecker's delta with value $1$
when $i=j$ and value $0$ otherwise.
\end{lemma}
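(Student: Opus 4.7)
The plan is to establish all five estimates by combining the trace inequality \eqref{trace-inequality} with the bulk $L^2$-projection approximation bounds already recorded in Lemma \ref{Lemma5.2}. Since the quantities on the left-hand sides of \eqref{3.5}--\eqref{3.8-2} are boundary norms of differences $w-Qw$ (or sums/compositions of such), the unified strategy is: apply \eqref{trace-inequality} on each element $T$, expand the resulting bulk norm in terms of powers of $h_T$ weighted $L^2$-norms of $w-Qw$ and its gradient, and then sum over $T\in\T_h$ invoking the appropriate case of Lemma \ref{Lemma5.2}.

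First I would handle \eqref{3.5} by taking $\varphi=Eu-Q^{r_1}(Eu)$ in \eqref{trace-inequality}; after multiplying by $h_T$ and summing, the right-hand side is $\sum_T(\|Eu-Q^{r_1}Eu\|_T^2+h_T^2\|\nabla(Eu-Q^{r_1}Eu)\|_T^2)$, which is bounded by $Ch^{2(n-1)}\|u\|_{n+1}^2$ via \eqref{3.3} with $s=0$ and $s=1$. Bound \eqref{3.6} proceeds in the same way but starting from $\varphi=\kappa\nabla(Eu-Q^{r_1}(Eu))\cdot\bn$, which introduces one extra derivative; \eqref{trace-inequality} then produces a $\|\nabla^2(Eu-Q^{r_1}Eu)\|_T$ contribution. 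This term is controlled by \eqref{3.3} with $s=2$ provided the projected polynomials have degree $\geq 2$, i.e.\ $r_1\geq 1$; when $r_1=0$ the second derivative of the polynomial part vanishes and we can only bound $\|\nabla^2 Eu\|_T$ directly by $\|u\|_4$, which is exactly what the extra $h\delta_{r_1,0}\|u\|_4$ term encodes.

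For the mixed-projection estimates \eqref{3.7} and \eqref{3.8}, I would split the integrand by inserting and subtracting the exact trace. Writing
\[
\kappa\nabla(Q_0u)\cdot\bn-Q_g(\kappa\nabla u\cdot\bn)=\kappa\nabla(Q_0u-u)\cdot\bn+\bigl(\kappa\nabla u\cdot\bn-Q_g(\kappa\nabla u\cdot\bn)\bigr),
\]
the first piece is handled by the trace inequality applied to $\nabla(Q_0u-u)$ together with \eqref{3.2} (used with $s=1,2$), while the second piece is estimated by the standard $L^2$-projection error on the edge/face $e$, using a scaled trace-type bound $\|w-Q_g w\|_e^2\leq Ch_e^{-1}\|w-Q^{k-1}w\|_T^2+Ch_e\|\nabla(w-Q^{k-1}w)\|_T^2$ combined with \eqref{3.2-2}. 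The decomposition for \eqref{3.8} is analogous, writing $Q_0u-Q_bu=(Q_0u-u)+(u-Q_bu)$, with the $Q_b$ part controlled in the same trace/projection manner. Finally, \eqref{3.8-2} follows from a direct application of \eqref{trace-inequality} to $\varphi=(\kappa\nabla u-Q^{r_2}(\kappa\nabla u))\cdot\bn$, followed by \eqref{3.2-2} with $s=0,1$.

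The main technical obstacle is the $r_1=0$ case of \eqref{3.6}: in that regime the discrete Hessian of the projected polynomial is zero, so one cannot invoke \eqref{3.3} with $s=2$ and must instead bound $\|\nabla^2 Eu\|_T$ directly; keeping the correct power of $h$ in this degenerate case is what produces the $h\delta_{r_1,0}\|u\|_4$ term and is the one place where a naive ``trace inequality plus Lemma \ref{Lemma5.2}'' argument requires real care. The remaining estimates are essentially mechanical once the right splitting is chosen.
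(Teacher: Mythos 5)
Your proposal is correct and follows essentially the same route as the paper: each boundary term is converted to bulk terms via the trace inequality \eqref{trace-inequality} and then controlled by the projection estimates of Lemma \ref{Lemma5.2}, with the $h\delta_{r_1,0}\|u\|_4$ term in \eqref{3.6} arising from the degenerate low-order case exactly as you describe. The only (harmless) deviation is in \eqref{3.7}--\eqref{3.8}, where the paper avoids your two-term splitting by observing that $\kappa\nabla(Q_0u)\cdot\bn$ and $Q_0u|_e$ already lie in the ranges of $Q_g$ and $Q_b$, so these edge projections can be dropped by their $L^2(e)$-contraction property, reducing everything to a term of the form $\|\cdot-u\|_{\partial T}$ handled by \eqref{trace-inequality} and \eqref{3.2}.
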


\begin{proof} To prove (\ref{3.5}), by the trace inequality (\ref{trace-inequality}) and the
estimate (\ref{3.3}), we get
\begin{equation*}
\begin{split}
&\sum_{T\in {\cal T}_h} h_T\|Eu-Q^{r_1}(Eu)\|_{\partial T}^2\\
\leq & C\sum_{T\in {\cal T}_h}  \|Eu-Q^{r_1}(Eu)\|_{T}^2+h_T^2\|Eu-Q^{r_1}(Eu)\|_{1,T}^2 \\
\leq & Ch^{2n-2}\|u\|^2_{n+1}.
\end{split}
\end{equation*}

As to (\ref{3.6}), by the trace inequality (\ref{trace-inequality})
and the estimate (\ref{3.3}), we obtain
\begin{equation*}
\begin{split}
&\sum_{T\in {\cal
T}_h} h_T^3\|\kappa \nabla(E
 u-Q^{r_1}(Eu)) \cdot \textbf{n}\|_{\partial T}^2\\
 \leq & C\sum_{T\in {\cal
T}_h} h_T^2\|\nabla  (Eu-Q^{r_1}(Eu))\|_{T}^2
+h_T^4\| \nabla (Eu-Q^{r_1}(Eu))\|_{1,T}^2\Big)\\
\leq & Ch^{2n-2}\big(\|u\|^2_{n+1}+h^2\delta_{r_1,0}\|u\|_4^2).
\end{split}
\end{equation*}

As to (\ref{3.7}), by  the trace inequality (\ref{trace-inequality})
and the estimate (\ref{3.2}), we have

\begin{equation*}
\begin{split}
& \sum_{T\in {\cal T}_h}h_T^{-1}\| \kappa\nabla (Q_0u)\cdot \textbf{n}-Q_g(\kappa \nabla
u\cdot \textbf{n})\|_{\partial T}^2 \\
\leq& \sum_{T\in {\cal T}_h}h_T^{-1}\| \kappa\nabla (Q_0u)\cdot \textbf{n}- \kappa \nabla
u\cdot \textbf{n} \|_{\partial T}^2 \\
\leq& C\sum_{T\in {\cal T}_h}h_T^{-1}\|  \nabla (Q_0u)  -  \nabla
u  \|_{\partial T}^2 \\
\leq& C\sum_{T\in {\cal T}_h} h_T^{-2}\|\nabla Q_0u-\nabla u\|_{ T}^2+\|\nabla Q_0u-\nabla u\|_{1,T}^2 \\
\leq&  Ch^{2m-2}\|u\|^2_{m+1}.
\end{split}
\end{equation*}

As to (\ref{3.8}), by the trace inequality
(\ref{trace-inequality}) and the estimate (\ref{3.2}), we have
\begin{equation*}
\begin{split}
&\sum_{T\in {\cal T}_h}h_T^{-3}\| Q_0u - Q_bu\|_{\partial T}^2\\
\leq & \sum_{T\in {\cal T}_h}h_T^{-3}\| Q_0u -  u\|_{\partial T}^2\\
 \leq& C\sum_{T\in {\cal T}_h} h_T^{-4}\|Q_0u-u\|_{T}^2+h_T^{-2}\|\nabla(Q_0u-u)\|_{T}^2 \\
\leq&  Ch^{2m-2}\|u\|^2_{m+1}.
\end{split}
\end{equation*}

Finally, as to  (\ref{3.8-2}), by the trace inequality
(\ref{trace-inequality}) and the estimate (\ref{3.2-2}), we have
\begin{equation*}
\begin{split}
& \sum_{T\in {\cal T}_h} h_T^3\|(\kappa\nabla u -    Q^{r_2} (\kappa  \nabla u ))\cdot \bn \|_{\partial T}^2 \\
\leq & \sum_{T\in {\cal T}_h} h_T^2\| \kappa\nabla u -   Q^{r_2} (\kappa  \nabla u )  \|^2_{  T}+h_T^4 \| \kappa\nabla u -    Q^{r_2}(\kappa  \nabla u )  \|_{ 1, T}^2
  \\
\leq&  Ch^{2q+2}\|u\|^2_{q+1}.
\end{split}
\end{equation*}
This completes the proof of the lemma.
\end{proof}

\section{An Error Equation} 
Let $u$ be the exact solution of (\ref{0.1}), and let 
 $u_h=\{u_0,u_b, u_g\} \in V_h$ be the weak Galerkin finite element approximation satisfying (\ref{PDWG1}). Define the error function as 
\begin{equation}\label{error-term}
e_h=u-u_h.
\end{equation} 
The objective of this section is to derive
an error equation for $e_h$.

\begin{lemma}\label{Lemma6.1} The error function
$e_h\in V_h^0$ as defined by (\ref{error-term})   satisfies the following equation
\begin{equation}\label{4.1}
\sum_{T\in {\cal T}_h}(E_{w} e_h,E_{w}v )_T +2\mu (\kappa \nabla_w e_h,\nabla_w v )_T +\mu^2(e_0,v_0)_T =\phi_u(v),\qquad
\forall v\in V_h^0,
\end{equation}
where
\begin{equation}\label{phiu}
\begin{split}
\phi_u(v)=
&\sum_{T\in {\cal T}_h}\Big( -\langle  \kappa  \nabla (Eu-  
Q^{r_1}(Eu)  ) \cdot \textbf{n},
 v_0 -v_b \rangle_{\partial T} \\
&+ \langle  \kappa \nabla v_0\cdot \textbf{n}-v_g, Eu-Q^{r_1}E u\rangle_{\partial
T}\\
&+  2\mu  \langle v_0-v_b,   \kappa(\nabla u -     Q^{r_2}\nabla u)\cdot \bn   \rangle_{\partial T}\Big).\end{split}
\end{equation}
\end{lemma}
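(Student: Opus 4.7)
The plan is to derive \eqref{4.1} by subtracting the WG scheme \eqref{PDWG1} from the identity obtained by testing the strong form \eqref{0.1} against $v\in V_h^0$. By the linearity of the weak operators and Lemma \ref{Lemma5.1} applied with the exact solution $u$ (viewed as the weak function whose components are its own interior value and its exact traces $u|_{\partial T}$ and $\kappa\nabla u\cdot\bn|_{\partial T}$), we have $E_{w}u=Q^{r_1}(Eu)$ and $\nabla_{w}u=Q^{r_2}(\nabla u)$. Using \eqref{PDWG1} to eliminate the $u_h$ contributions, the task reduces to proving
\[
\sum_{T\in\T_h}\bigl[(Q^{r_1}(Eu),E_{w}v)_T+2\mu(\kappa Q^{r_2}(\nabla u),\nabla_{w}v)_T+\mu^2(u,v_0)_T-(f,v_0)_T\bigr]=\phi_u(v).
\]

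To verify this identity, I would expand each piece of the left-hand side. The two projection inner products are handled by applying \eqref{A.002} with $\varphi=Q^{r_1}(Eu)\in P_{r_1}(T)$ and \eqref{2.4-3} with $\boldsymbol{\psi}=\kappa Q^{r_2}(\nabla u)$, producing volume contributions $(Ev_0,Q^{r_1}(Eu))_T$ and $(\kappa\nabla v_0,Q^{r_2}(\nabla u))_T$ together with boundary contributions paired against $v_0-v_b$ and $\kappa\nabla v_0\cdot\bn-v_g$. Since $Ev_0$ and $\nabla v_0$ lie in polynomial spaces within the range of the respective $L^2$-projections, these volume terms simplify to $(Ev_0,Eu)_T$ and $(\kappa\nabla v_0,\nabla u)_T$. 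In parallel, writing $f=E^2u-2\mu Eu+\mu^2u$ and performing classical integration by parts elementwise on $(f,v_0)_T$ produces exactly the same volume terms $(Eu,Ev_0)_T+2\mu(\kappa\nabla u,\nabla v_0)_T+\mu^2(u,v_0)_T$, together with the boundary remnants $-\langle Eu,\kappa\nabla v_0\cdot\bn\rangle_{\partial T}+\langle\kappa\nabla(Eu)\cdot\bn,v_0\rangle_{\partial T}-2\mu\langle\kappa\nabla u\cdot\bn,v_0\rangle_{\partial T}$.

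The main obstacle is the careful bookkeeping of boundary terms when summing over $T\in\T_h$. Using the single-valuedness of the smooth quantities $Eu$, $\kappa\nabla(Eu)\cdot\bn$, and $\kappa\nabla u\cdot\bn$ across interior faces, together with the conditions $v_g^L+v_g^R=0$ on interior faces and $v_b=v_g=0$ on $\partial\Omega$, I would insert the vanishing sums $\sum_T\langle Eu,v_g\rangle_{\partial T}=0$, $\sum_T\langle\kappa\nabla(Eu)\cdot\bn,v_b\rangle_{\partial T}=0$, and $\sum_T\langle\kappa\nabla u\cdot\bn,v_b\rangle_{\partial T}=0$, converting every $(f,v_0)_T$ boundary integral into one tested against $v_0-v_b$ or $\kappa\nabla v_0\cdot\bn-v_g$. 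After this rewriting, the volume terms cancel between the projection side and the $(f,v_0)$ side, and the surviving boundary contributions regroup as
\[
-\langle\kappa\nabla(Eu-Q^{r_1}(Eu))\cdot\bn,v_0-v_b\rangle_{\partial T}+\langle\kappa\nabla v_0\cdot\bn-v_g,Eu-Q^{r_1}(Eu)\rangle_{\partial T}+2\mu\langle v_0-v_b,\kappa(\nabla u-Q^{r_2}(\nabla u))\cdot\bn\rangle_{\partial T},
\]
which is exactly $\phi_u(v)$ as defined in \eqref{phiu}, completing the derivation.
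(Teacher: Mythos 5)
Your proposal is correct and follows essentially the same route as the paper: it uses Lemma \ref{Lemma5.1} to identify $E_w u=Q^{r_1}(Eu)$ and $\nabla_w u=Q^{r_2}(\nabla u)$, expands these via \eqref{A.002} and \eqref{2.4-3}, integrates the strong form by parts elementwise, and cancels the interface sums using the single-valuedness of the smooth fluxes together with $v_g^L+v_g^R=0$ and the vanishing of $v_b,v_g$ on $\partial\Omega$. The only difference is organizational (you subtract the scheme first and verify the residual identity, while the paper first derives the identity satisfied by the exact solution and then subtracts \eqref{PDWG1}), which is immaterial.
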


\begin{proof} Using (\ref{A.002}) with
$\varphi=E_w u$, from (\ref{pro1}),  we obtain
\begin{equation*}
\begin{split}
(E_wv,E_{ w}u)_T=&(E_wv, Q^{r_1}Eu)_T\\=&(Ev_0, Q^{r_1}Eu)_T+\langle v_0-v_b, \kappa\nabla (Q^{r_1}Eu)
\cdot \textbf{n} \rangle_{\partial T}\\
&-\langle\kappa \nabla v_0\cdot \textbf{n}-v_{g},Q^{r_1}Eu\rangle_{\partial T}\\
=&(Ev_0, Eu)_T+\langle
v_0-v_b,\kappa \nabla (Q^{r_1}Eu)\cdot
\textbf{n} \rangle_{\partial T}\\
&-\langle\kappa \nabla v_0\cdot
\textbf{n}  -v_{g },Q^{r_1}E
u\rangle_{\partial T},
\end{split}
\end{equation*}
which implies that
\begin{equation}\label{4.2}
\begin{split}
(Ev_0, Eu)_T=&
(E_{ w}u,E_{ w}v)_T-\langle v_0-v_b,
\kappa \nabla(Q^{r_1}Eu)\cdot \textbf{n} \rangle_{\partial T}\\
&+\langle \kappa \nabla v_0\cdot
\textbf{n}-v_{g },Q^{r_1}E
u\rangle_{\partial T}.
\end{split}
\end{equation}
 
 Next, it follows from the
integration by parts that
\begin{equation}\label{part1}
 \begin{split}
\sum_{T\in {\cal T}_h}(Eu, Ev_0)_T=&
\sum_{T\in {\cal T}_h} (E^2u,v_0)_T-\langle  \kappa \nabla (Eu) \cdot \textbf{n},
 v_0 \rangle_{\partial T}
+\langle  \kappa \nabla v_0\cdot \textbf{n}, Eu\rangle_{\partial
T} .
 \end{split}
\end{equation}

  Using  (\ref{2.4-3}) with  $\boldsymbol{\psi}=\kappa Q^{r_2}(\nabla u)$, from  (\ref{pro2})  and  the  integration by parts, we have
\begin{equation}\label{part2}
 \begin{split}
& \sum_{T\in {\cal T}_h} 2\mu (\kappa \nabla_wu, \nabla_w v)_T\\
=& \sum_{T\in {\cal T}_h} 2\mu (\kappa Q^{r_2}(\nabla u), \nabla_w v)_T\\
 =&  \sum_{T\in {\cal T}_h}2\mu ( \nabla v_0,\kappa Q^{r_2}(\nabla u))_T -2\mu \langle v_0-v_b ,  \kappa Q^{r_2}(\nabla u) \cdot \bn\rangle_{\partial T}\\
 =&  \sum_{T\in {\cal T}_h}2\mu (\nabla v_0,\kappa   \nabla u)_T -2\mu  \langle v_0-v_b , \kappa Q^{r_2}(\nabla u) \cdot \bn\rangle_{\partial T}\\
=&  \sum_{T\in {\cal T}_h}-2\mu (v_0, \nabla \cdot( \kappa \nabla u ))_T + 2\mu  \langle  v_0,  \kappa \nabla u \cdot  \bn  \rangle_{\partial T} \\&-2\mu \langle v_0-v_b , \kappa Q^{r_2}(\nabla u) \cdot \bn\rangle_{\partial T}\\
=& \sum_{T\in {\cal T}_h} -2\mu (v_0, E u)_T + 2\mu  \langle v_0-v_b, \kappa(\nabla u -      Q^{r_2}(\nabla u))\cdot \bn   \rangle_{\partial T}  ,
 \end{split}
\end{equation}
where we have used the fact that the sum for the terms associated
with $v_{b}$   vanishes  (note that 
$v_{b}$  vanishes on $\partial T\cap\partial\Omega$).

 Testing $v_0$ on both sides of the first equation of \eqref{0.1} gives  
\begin{equation}\label{ee}
    \sum_{T\in {\cal T}_h} (E^2
u,v_0)_T-2\mu (Eu,v_0)_T+\mu^2(u,v_0)_T = \sum_{T\in {\cal T}_h}(f,v_0)_T. 
\end{equation}
  Adding (\ref{part1})-(\ref{part2}) and using \eqref{ee}, we have 
\begin{equation*}
\begin{split}
&\sum_{T\in {\cal T}_h} (Eu, Ev_0)_T+ 2\mu \kappa (\nabla_w  u, \nabla_w v)_T+\mu^2 (u,v_0)_T  \\
=& (f,v_0)+\sum_{T\in {\cal T}_h}\Big( -\langle  \kappa \nabla (Eu) \cdot \textbf{n},
 v_0 -v_b \rangle_{\partial T} +  \langle  \kappa \nabla v_0\cdot \textbf{n}-v_g, Eu\rangle_{\partial
T}\\
&+  2\mu  \langle v_0-v_b,    \kappa(\nabla u -      Q ^{r_2}    \nabla u )\cdot \bn   \rangle_{\partial T}\Big),
\end{split}
\end{equation*}
where we have used the fact that the sum for the terms associated
with $v_{b}$ and $v_g  $ vanishes  (note that both
$v_{b}$ and $v_g$ vanish on $\partial T\cap\partial\Omega$). Combining the
above equation with (\ref{4.2})    yield 
\begin{equation*}\label{4.3}
\begin{split}
&\sum_{T\in {\cal T}_h}(E_{ w}  u,E_{ w}v)_T+ 2\mu \kappa (\nabla_w u, \nabla_w v)_T+\mu^2 (u,v_0)_T \\  
=& (f,v_0)+\sum_{T\in {\cal T}_h}\Big( -\langle  \kappa  \nabla (Eu-  
Q^{r_1}(Eu)  ) \cdot \textbf{n},
 v_0 -v_b \rangle_{\partial T} \\
&+ \langle  \kappa \nabla v_0\cdot \textbf{n}-v_g, Eu-Q^{r_1}E u\rangle_{\partial
T}\\
&+  2\mu  \langle v_0-v_b,   \kappa(\nabla u -     Q^{r_2}\nabla u)\cdot \bn   \rangle_{\partial T}\Big),
\end{split}
\end{equation*} 
which, subtracting (\ref{PDWG1}),   completes the proof.
\end{proof}

\section{Error Estimates} 
This section derives error estimates for the solution obtained using the stabilizer-free weak Galerkin algorithm.

\begin{theorem}  Let $u_h\in V_h$ be the weak Galerkin finite element solution resulting from \eqref{PDWG1}, using finite elements of order $k\geq 2$. Suppose the exact solution $u$ of \eqref{0.1} is sufficiently regular, satisfying $u\in H^{ k+1 }(\Omega)$. Then, there exists a constant $C$ such that
\begin{equation}\label{4}
\3bar u-Q_hu\3bar\leq
Ch^{k-1} \|u\|_{k+1}.
\end{equation} 
\end{theorem}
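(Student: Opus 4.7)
The plan is to bound $\3bar u-Q_hu\3bar$ by first reducing to the discrete norm $\|u-Q_hu\|_{2,h}$ via the upper half of the norm equivalence from Lemma~\ref{normeqva}, and then controlling each of the five summands of $\|u-Q_hu\|_{2,h}^2$ using the projection error estimates in Lemmas~\ref{Lemma5.2} and~\ref{Lemma5.3}. I would interpret $u-Q_hu$ as the weak function $\{u-Q_0u,\;u|_{\partial T}-Q_bu,\;\kappa\nabla u\cdot\bn-Q_g(\kappa\nabla u\cdot\bn)\}$. The crucial observation is that the inequality $\3bar v\3bar\leq C_2\|v\|_{2,h}$ proved in Lemma~\ref{normeqva} is obtained by applying Cauchy--Schwarz together with the inverse and trace inequalities to the representation \eqref{A.002} and its $\nabla_w$ analogue \eqref{2.4-3}; none of these steps actually requires $v\in V_h$, so the inequality carries over to $v=u-Q_hu$ as soon as $v_0=u-Q_0u$ is regular enough for $Ev_0$ and $\nabla v_0$ to make sense, which is guaranteed by $u\in H^{k+1}(\Omega)$.

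With that reduction in place, I would unpack $\|u-Q_hu\|_{2,h}^2$ via \eqref{disnorm} into five pieces: the interior terms $\sum_T\|E(u-Q_0u)\|_T^2$, $2\mu\sum_T(\kappa\nabla(u-Q_0u),\nabla(u-Q_0u))_T$, and $\mu^2\sum_T\|u-Q_0u\|_T^2$, together with the two boundary terms $\sum_T h_T^{-1}\|\kappa\nabla Q_0u\cdot\bn-Q_g(\kappa\nabla u\cdot\bn)\|_{\partial T}^2$ and $\sum_T h_T^{-3}\|Q_0u-Q_bu\|_{\partial T}^2$, where the boundary expressions arise after simplifying $\kappa\nabla v_0\cdot\bn-v_g$ and $v_0-v_b$ for the weak function $v=u-Q_hu$ (using $u|_{\partial T}-(u|_{\partial T}-Q_bu)=Q_bu$ and the analogous cancellation for the $v_g$ slot). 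The three interior pieces are controlled by \eqref{3.2} with $m=k$ at $s=2,1,0$, giving bounds of orders $h^{2(k-1)}$, $h^{2k}$ and $h^{2(k+1)}$ times $\|u\|_{k+1}^2$ respectively, and the two boundary pieces are exactly the contents of \eqref{3.7} and \eqref{3.8} with $m=k$, each bounded by $Ch^{2(k-1)}\|u\|_{k+1}^2$.

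Summing the five contributions yields $\|u-Q_hu\|_{2,h}^2\leq Ch^{2(k-1)}\|u\|_{k+1}^2$, and composing with the reduction step produces $\3bar u-Q_hu\3bar\leq Ch^{k-1}\|u\|_{k+1}$, as required. The main subtlety I expect is the first step, namely the justification that the upper direction of the norm equivalence applies at $v=u-Q_hu$, which is not literally in $V_h$; this is mild, since the proof of that direction in Lemma~\ref{normeqva} uses only the identities \eqref{A.002} and \eqref{2.4-3} together with inverse and trace inequalities applied to polynomial test functions, and the argument is insensitive to whether $v$ itself is a member of $V_h$ or merely a sufficiently regular weak triplet. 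All other steps reduce to assembling projection error bounds already established.
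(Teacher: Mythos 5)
Your proposal is correct and follows essentially the same route as the paper: the paper simply carries out the reverse-direction computation of Lemma \ref{normeqva} inline --- testing \eqref{A.002} and \eqref{2.4-3} against $\phi=E_w(u-Q_hu)$ and $\boldsymbol{\psi}=\nabla_w(u-Q_hu)$, using the same cancellations $Q_bu-Q_0u$ and $\kappa\nabla Q_0u\cdot\bn-Q_g(\kappa\nabla u\cdot\bn)$, and the same estimates \eqref{3.2}, \eqref{3.7}, \eqref{3.8} --- rather than citing the norm equivalence for $v=u-Q_hu\notin V_h$. Your justification that the upper half of that equivalence extends to sufficiently regular weak triplets outside $V_h$ is sound, and the paper's inline treatment merely makes that extension unnecessary.
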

\begin{proof}
Using \eqref{A.002}, the trace inequality \eqref{x}, Cauchy-Schwartz inequality, the inverse inequality, the estimates \eqref{3.2}, \eqref{3.7} and \eqref{3.8} we derive: 
\begin{equation*}
    \begin{split}
&\sum_{T\in {\cal T}_h} (E_w (u-Q_hu), \phi)_T    \\=& 
\sum_{T\in {\cal T}_h}(E(u-Q_0u),\phi)_T+\langle
Q_bu-Q_0u, \kappa\nabla\phi\cdot \textbf{n} \rangle_{\partial T}\\
&-\langle
 \kappa \nabla (u-Q_0u)\cdot \textbf{n}-(u_g-Q_{g}(\kappa \nabla  u \cdot\bn)),\phi  \rangle_{\partial T}\\
  \\=& 
\sum_{T\in {\cal T}_h}(E(u-Q_0u),\phi)_T+\langle
Q_bu-Q_0u, \kappa\nabla\phi\cdot \textbf{n} \rangle_{\partial T}\\
&-\langle -
 \kappa \nabla  Q_0u \cdot \textbf{n}+Q_{g}(\kappa \nabla  u \cdot\bn),\phi  \rangle_{\partial T}\\
 \leq & (\sum_{T\in {\cal T}_h}\|E(u-Q_0u)\|_T^2)^{\frac{1}{2}} ( \sum_{T\in {\cal T}_h}\|\phi\|_T^2)^{\frac{1}{2}}
\\ &+ (\sum_{T\in {\cal T}_h}\|Q_bu-Q_0u\|_{\partial T}^2)^{\frac{1}{2}} ( \sum_{T\in {\cal T}_h}\|\kappa\nabla\phi\cdot \textbf{n} \|_{\partial T}^2)^{\frac{1}{2}}\\
& + (\sum_{T\in {\cal T}_h}\|-
 \kappa \nabla  Q_0u \cdot \textbf{n}+Q_{g}(\kappa \nabla  u \cdot\bn)\|_{\partial T}^2)^{\frac{1}{2}} ( \sum_{T\in {\cal T}_h}\|\phi\|_{\partial T}^2)^{\frac{1}{2}}\\
 \leq & 
Ch^{k-1} \|u\|_{k+1}( \sum_{T\in {\cal T}_h}\|\phi\|_T^2)^{\frac{1}{2}}.
    \end{split}
\end{equation*}
Setting $\phi=E_w (u-Q_hu)$ leads to
\begin{equation}\label{q1}
    \sum_{T\in {\cal T}_h} \|E_w (u-Q_hu)\|^2_T \leq Ch^{2k-2} \|u\|^2_{k+1} 
\end{equation}
 
Using \eqref{2.4-3}, the trace inequality \eqref{x}, Cauchy-Schwartz inequality, \eqref{3.2}, \eqref{3.8}, we have
    \begin{equation*}
    \begin{split}
&\sum_{T\in {\cal T}_h}  2\mu (\kappa \nabla_w (u-Q_hu), \boldsymbol{ \psi})_T     
\\
= & \sum_{T\in {\cal T}_h}   2\mu\kappa(( \nabla (u-Q_0u),  \boldsymbol{ \psi})_T-
 \langle Q_bu-Q_0u ,\boldsymbol{ \psi}\cdot  \textbf{n}\rangle_{\partial T})\\
\leq & (\sum_{T\in {\cal T}_h} \|   \nabla (u-Q_0u)\|^2_T) ^{\frac{1}{2}}  (\sum_{T\in {\cal T}_h}\|\boldsymbol{ \psi}\|^2_T)^{\frac{1}{2}}+ (\sum_{T\in {\cal T}_h} 
 \| Q_bu-Q_0u\|^2_{\partial T})^{\frac{1}{2}}(\sum_{T\in {\cal T}_h}\|\boldsymbol{ \psi}\cdot  \textbf{n}\|^2_{\partial T})^{\frac{1}{2}}\\
 \leq & 
Ch^{k} \|u\|_{k+1}  ( \sum_{T\in {\cal T}_h}\|\boldsymbol{ \psi}\|_T^2)^{\frac{1}{2}}.\end{split}
\end{equation*}

Letting $\boldsymbol{ \psi}=\nabla_w (u-Q_hu) $ gives
   \begin{equation}\label{q2}
    \begin{split}
  \sum_{T\in {\cal T}_h}  2\mu (\kappa \nabla_w (u-Q_hu), \nabla _w(u-Q_hu))_T    \leq  
Ch^{2k} \|u\|^2_{k+1}. \end{split}
\end{equation}

Using Cauchy-Schwartz inequality, \eqref{3.2}, we have
  \begin{equation}\label{q3}
    \begin{split}
\sum_{T\in {\cal T}_h}  \mu^2(u-Q_0u, u-Q_0u)_T\leq  (\sum_{T\in {\cal T}_h} 
 \| u-Q_0u\|^2_{T}) \leq Ch^{2(k+1)}\|u\|^2_{k+1}.
    \end{split}
\end{equation}

Combining \eqref{q1}, \eqref{q2} and \eqref{q3} completes the proof \eqref{4}.
\end{proof}

\begin{theorem}  Let $k\geq 2$. Let $u_h\in V_h$ be the weak Galerkin finite element solution of (\ref{PDWG1}).  Assume that the exact solution $u$ of (\ref{0.1})
satisfies $u\in H^{\max\{k+1,4\}}(\Omega)$.
Then, there exists a constant $C$ such that
\begin{equation} 
\3bar u- u_h\3bar\leq
Ch^{k-1}\Big(\|u\|_{k+1}+h\delta_{r_1,0}\|u\|_4\Big).
\end{equation} 
\end{theorem}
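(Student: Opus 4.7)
The plan is to apply the triangle inequality
\[
\3bar u - u_h \3bar \;\leq\; \3bar u - Q_h u \3bar + \3bar Q_h u - u_h \3bar,
\]
where the first term is already bounded by $Ch^{k-1}\|u\|_{k+1}$ via the previous theorem, so the whole task reduces to controlling the auxiliary function $\xi_h := Q_h u - u_h$. Note that $\xi_h \in V_h^0$ because $Q_hu$ and $u_h$ share the same boundary data by construction.

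To estimate $\3bar \xi_h \3bar$, I would take $v = \xi_h$ in the error equation of Lemma \ref{Lemma6.1} and split $u - u_h = (u - Q_h u) + \xi_h$ on the left-hand side. This yields
\begin{equation*}
\3bar \xi_h \3bar^2 = \phi_u(\xi_h) - \sum_{T\in {\cal T}_h}\Big[(E_w(u-Q_h u), E_w \xi_h)_T + 2\mu (\kappa \nabla_w(u-Q_h u), \nabla_w \xi_h)_T + \mu^2 (u - Q_0 u, \xi_0)_T\Big].
\end{equation*}
The bracketed expression is a Cauchy--Schwarz pairing for the very bilinear form defining $\3bar \cdot \3bar^2$, so it is bounded by $\3bar u - Q_h u\3bar\cdot\3bar \xi_h \3bar \leq C h^{k-1}\|u\|_{k+1}\,\3bar \xi_h \3bar$ via the previous theorem.

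The key technical step is bounding $|\phi_u(\xi_h)|$ term by term using (\ref{phiu}). For each of the three boundary pairings I would apply Cauchy--Schwarz with weights $h_T^{\pm s}$ chosen so that the ``data'' factor activates one of the estimates from Lemma \ref{Lemma5.3}, while the ``test'' factor reproduces a component of the discrete $H^2$ semi-norm $\|\xi_h\|_{2,h}$. Specifically, the first pairing combines (\ref{3.6}) with the $h_T^{-3}\|\xi_0-\xi_b\|_{\partial T}^2$ part of the norm; the second combines (\ref{3.5}) with the $h_T^{-1}\|\kappa\nabla \xi_0\cdot\bn - \xi_g\|_{\partial T}^2$ part; and the $\mu$-pairing combines (\ref{3.8-2}) with the $h_T^{-3}\|\xi_0-\xi_b\|_{\partial T}^2$ part, each taken with the maximal admissible orders $n = q = k$. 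Summing the three estimates gives
$|\phi_u(\xi_h)| \leq C h^{k-1}\big(\|u\|_{k+1}+ h\delta_{r_1,0}\|u\|_4\big)\|\xi_h\|_{2,h}$, and the norm equivalence $\|\xi_h\|_{2,h} \leq C \3bar \xi_h \3bar$ from Lemma \ref{normeqva} converts this into an energy-norm bound.

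Combining everything yields $\3bar \xi_h \3bar^2 \leq C h^{k-1}\big(\|u\|_{k+1}+h\delta_{r_1,0}\|u\|_4\big)\3bar \xi_h \3bar$, whence $\3bar \xi_h \3bar \leq Ch^{k-1}\big(\|u\|_{k+1}+h\delta_{r_1,0}\|u\|_4\big)$; feeding this back into the triangle inequality delivers the claimed estimate. The main obstacle is really only the bookkeeping of $h_T$-weights in the three boundary Cauchy--Schwarz steps --- matching them so that one side hits exactly the right estimate in Lemma \ref{Lemma5.3} while the other reassembles pieces of $\|\xi_h\|_{2,h}$. Once this matching is set up, Lemmas \ref{Lemma5.3} and \ref{normeqva} do all the heavy lifting.
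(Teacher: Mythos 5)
Your proposal is correct and follows essentially the same route as the paper: the heart of the argument in both cases is the term-by-term Cauchy--Schwarz estimation of $\phi_u$ with exactly the weight pairings you describe (estimates \eqref{3.6}, \eqref{3.5}, \eqref{3.8-2} against the corresponding pieces of $\|\cdot\|_{2,h}$), followed by the norm equivalence of Lemma \ref{normeqva} and the projection estimate \eqref{4}. The only difference is organizational --- you test the error equation directly with $\xi_h=Q_hu-u_h$ and get a linear bound on $\3bar\xi_h\3bar$, whereas the paper works with $\3bar e_h\3bar^2$ and absorbs terms via a quadratic inequality --- which changes nothing of substance.
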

\begin{proof}  We estimate each term on the right-hand side of (\ref{4.1}) as follows:
For the first term, applying the Cauchy-Schwarz inequality and using estimate (\ref{3.6}), we obtain
\begin{equation*} 
\begin{split}
 &\Big|\sum_{T\in {\cal T}_h}\langle  \kappa \nabla (Eu-Q^{r_1} Eu) \cdot \textbf{n},
v_0-v_b \rangle_{\partial T}\Big|\\
 \leq&\Big(\sum_{T\in {\cal T}_h}h_T^3\| \kappa \nabla (Eu-Q^{r_1}Eu) \cdot \textbf{n}\|^2_{\partial T}\Big)^{\frac{1}{2}}
\Big(\sum_{T\in {\cal T}_h}h_T^{-3}\| v_0-v_b \|^2_{\partial T}\Big)^{\frac{1}{2}}\\
  \leq& Ch^{k-1}(\|u\|_{k+1}+h\delta_{r_1,0}\|u\|_4)\|v\|_{2,h}.
\end{split}
\end{equation*}For the second term, again applying the Cauchy-Schwarz inequality and using estimate (\ref{3.5}), we obtain
\begin{equation*}\label{4.6}
\begin{split}
 &\Big|\sum_{T\in {\cal T}_h}\langle  \kappa \nabla v_0\cdot \textbf{n}-v_g, Eu-Q^{r_1}Eu\rangle_{\partial
T}\Big|\\
 \leq&\Big(\sum_{T\in {\cal T}_h}h_T^{-1}\|\kappa \nabla v_0\cdot \textbf{n}-v_g\|^2_{\partial T}\Big)^{\frac{1}{2}}
\Big(\sum_{T\in {\cal T}_h}h_T \|Eu-Q^{r_1}Eu\|^2_{\partial T}\Big)^{\frac{1}{2}}\\
  \leq& Ch^{k-1} \|u\|_{k+1} \|v\|_{2,h}.
\end{split}
\end{equation*}For the third term, using the Cauchy-Schwarz inequality and estimate (\ref{3.8-2}), we derive
\begin{equation*} 
\begin{split}
 &\sum_{T\in {\cal T}_h} 2\mu  \langle v_0-v_b,     \kappa(\nabla u -     Q^{r_2}     \nabla u )\cdot \bn   \rangle_{\partial T}\\
 \leq&\Big(\sum_{T\in {\cal T}_h}h_T^{-3}\|v_0-v_b\|^2_{\partial T}\Big)^{\frac{1}{2}}
\Big(\sum_{T\in {\cal T}_h}h_T ^3\| \kappa(\nabla u -     Q^{r_2}     \nabla u )\cdot \bn\Big)^{\frac{1}{2}}\\
  \leq& Ch^{k+1} \|u\|_{k+1} \|v\|_{2,h}.
\end{split}
\end{equation*}
 Substituting these estimates into (\ref{4.1}) and using \eqref{normeq}, we obtain
\begin{equation}\label{s1}
\begin{split}
&(E_{w} e_h,E_{w}v ) +2\mu (\kappa \nabla_w e_h,\nabla_w v ) +\mu^2(e_h,v )  \\
\leq &   Ch^{k-1}(\|u\|_{k+1}+h\delta_{r_1,0}\|u\|_4)\|v\|_{2,h}\\
\leq &   Ch^{k-1}(\|u\|_{k+1}+h\delta_{r_1,0}\|u\|_4)\3bar v\3bar.
\end{split}
\end{equation}
 Choosing  $v=Q_u-u_h$ in \eqref{s1} and applying the Cauchy-Schwarz inequality and the estimate \eqref{4}, we derive
\begin{equation*}\label{s2}
\begin{split}
&\3bar e_h\3bar^2\\
=& (E_{w} e_h,E_{w}(u-Q_hu) ) +2\mu (\kappa \nabla_w e_h,\nabla_w (u-Q_hu) ) +\mu^2(e_h,(u-Q_hu) ) \\
&+(E_{w} e_h,E_{w} (Q_hu-u_h) ) +2\mu (\kappa \nabla_w e_h,\nabla_w (Q_hu-u_h) ) +\mu^2(e_h,(Q_hu-u_h) ) \\
\leq & \3bar e_h\3bar \3bar u-Q_hu\3bar+  Ch^{k-1}(\|u\|_{k+1}+h\delta_{r_1,0}\|u\|_4)\3bar Q_hu-u_h\3bar \\
\leq & \3bar e_h\3bar \3bar u-Q_hu\3bar+  Ch^{k-1}(\|u\|_{k+1}+h\delta_{r_1,0}\|u\|_4)(\3bar Q_hu-u\3bar+\3bar u-u_h\3bar) \\
\leq &Ch^{k-1}(\|u\|_{k+1}+h\delta_{r_1,0}\|u\|_4)\3bar e_h\3bar+Ch^{k-1}(\|u\|_{k+1}+h\delta_{r_1,0}\|u\|_4)h^{k-1}\|u\|_{k+1}.
\end{split}
\end{equation*}This establishes the desired bound, concluding the proof. 
\end{proof}

\section{Numerical test}
In the first numerical example,  we solve \eqref{0.1} on the unit square domain $\Omega
  =(0,1)\times(0,1)$ with the following parameters and the exact solution, 
\an{\label{s-1} \ad{ \mu&=1, \quad \kappa=\p{2 & 0 \\ 0 & 2}, \\
                     u&=(x-x^2)^2(y-y^2)^2.  } }

     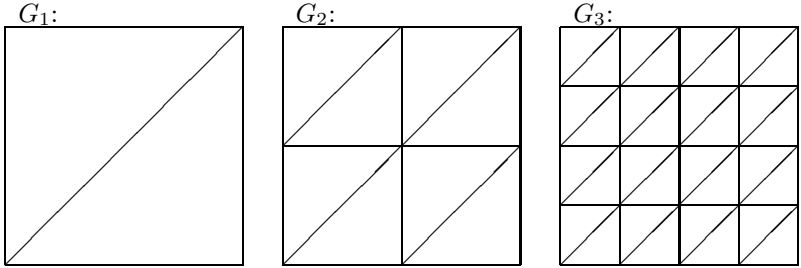
\begin{figure}[H] \setlength\unitlength{1pt}\begin{center}
    \begin{picture}(300,100)(0,0)
     \def\mc{\begin{picture}(90,90)(0,0)
       \put(0,0){\line(1,0){90}} \put(0,90){\line(1,0){90}}
      \put(0,0){\line(0,1){90}}  \put(90,0){\line(0,1){90}}  \put(0,0){\line(1,1){90}}  
      \end{picture}}

    \put(0,0){\mc} \put(5,92){$G_1$:} \put(110,92){$G_2$:}\put(215,92){$G_3$:}
      \put(105,0){\setlength\unitlength{0.5pt}\begin{picture}(90,90)(0,0)
    \put(0,0){\mc}\put(90,0){\mc}\put(0,90){\mc}\put(90,90){\mc}\end{picture}}
      \put(210,0){\setlength\unitlength{0.25pt}\begin{picture}(90,90)(0,0)
    \multiput(0,0)(90,0){4}{\multiput(0,0)(0,90){4}{\mc}} \end{picture}}
    \end{picture}
 \caption{The first three grids for the computation in
    Tables \ref{t-1}--\ref{t-2}. }\label{grid1} 
    \end{center} \end{figure}

We compute the finite element solutions for \eqref{s-1} on uniform 
       triangular grids shown in Figure \ref{grid1} by 
  the  $P_k$/$P_k$/$P_{k-1}$ WG finite elements, defined in \eqref{Wk}, 
      for $k=2,3$ and $4$.
The results are listed in Table \ref{t-1}. 
The optimal orders of convergence are achieved for all solutions in all norms.

\begin{table}[H]
  \centering  \renewcommand{\arraystretch}{1.1}
  \caption{The error and the computed order of convergence 
     for the solution \eqref{s-1} on Figure \ref{grid1} triangular meshes. }
  \label{t-1}
\begin{tabular}{c|cc|cc|cc}
\hline
  $G_i$ &  $\|Q_0  u- u_0\|_0$  & $ h^r $ & $\|\nabla_w(Q_h u- u_h)\|_0$  & $ h^r $ &
      $\|E_w(Q_h u- u_h)\|_0$  & $ h^r $    \\
\hline&\multicolumn{6}{c}{ By the $P_2$/$P_2$/$P_{1}$ WG finite element \eqref{Wk}.}\\
\hline 
 4&     0.183E-3 &  1.5&     0.196E-2 &  1.6&     0.247E+0 &  0.9 \\
 5&     0.502E-4 &  1.9&     0.536E-3 &  1.9&     0.125E+0 &  1.0 \\
 6&     0.128E-4 &  2.0&     0.137E-3 &  2.0&     0.629E-1 &  1.0 \\
\hline&\multicolumn{6}{c}{ By the $P_3$/$P_3$/$P_{2}$ WG finite element \eqref{Wk}.}\\
\hline 
 3&     0.360E-4 &  3.2&     0.660E-3 &  2.9&     0.835E-1 &  1.8 \\
 4&     0.277E-5 &  3.7&     0.808E-4 &  3.0&     0.221E-1 &  1.9 \\
 5&     0.184E-6 &  3.9&     0.982E-5 &  3.0&     0.561E-2 &  2.0 \\
\hline&\multicolumn{6}{c}{ By the $P_4$/$P_4$/$P_{3}$ WG finite element \eqref{Wk}.}\\
\hline 
 2&     0.243E-4 &  3.8&     0.718E-3 &  3.2&     0.669E-1 &  2.4 \\
 3&     0.140E-5 &  4.1&     0.777E-4 &  3.2&     0.126E-1 &  2.4 \\
 4&     0.351E-7 &  5.3&     0.528E-5 &  3.9&     0.174E-2 &  2.9 \\
\hline
    \end{tabular}%
\end{table}%

In the second numerical example,  we solve \eqref{0.1} on the unit square domain $\Omega
  =(0,1)\times(0,1)$ with the following parameters and the exact solution, 
\an{\label{s-2} \ad{ \mu&=1, \quad \kappa=\p{2 & -1 \\ -1 & 2}, \\
                     u&=(x-x^2)^2(y-y^2)^2.  } }

We compute the finite element solutions for \eqref{s-2} on uniform 
       triangular grids shown in Figure \ref{grid1} by 
  the  $P_k$/$P_k$/$P_{k-1}$ WG finite elements, defined in \eqref{Wk}, 
      for $k=2,3$ and $4$.
The results are listed in Table \ref{t-2}. 
The optimal orders of convergence are achieved for all solutions in all norms.

\begin{table}[H]
  \centering  \renewcommand{\arraystretch}{1.1}
  \caption{The error and the computed order of convergence 
     for the solution \eqref{s-2} on Figure \ref{grid1} triangular meshes. }
  \label{t-2}
\begin{tabular}{c|cc|cc|cc}
\hline
  $G_i$ &  $\|Q_0  u- u_0\|_0$  & $ h^r $ & $\|\nabla_w(Q_h u- u_h)\|_0$  & $ h^r $ &
      $\|E_w(Q_h u- u_h)\|_0$  & $ h^r $    \\
\hline&\multicolumn{6}{c}{ By the $P_2$/$P_2$/$P_{1}$ WG finite element \eqref{Wk}.}\\
\hline  
 4&     0.267E-3 &  1.2&     0.258E-2 &  1.3&     0.327E+0 &  0.9 \\
 5&     0.858E-4 &  1.6&     0.821E-3 &  1.7&     0.165E+0 &  1.0 \\
 6&     0.236E-4 &  1.9&     0.226E-3 &  1.9&     0.830E-1 &  1.0 \\
\hline&\multicolumn{6}{c}{ By the $P_3$/$P_3$/$P_{2}$ WG finite element \eqref{Wk}.}\\
\hline 
 3&     0.586E-4 &  2.9&     0.780E-3 &  2.8&     0.106E+0 &  1.8 \\
 4&     0.474E-5 &  3.6&     0.910E-4 &  3.1&     0.282E-1 &  1.9 \\
 5&     0.313E-6 &  3.9&     0.106E-4 &  3.1&     0.716E-2 &  2.0 \\
\hline&\multicolumn{6}{c}{ By the $P_4$/$P_4$/$P_{3}$ WG finite element \eqref{Wk}.}\\
\hline 
 2&     0.291E-4 &  3.8&     0.783E-3 &  3.2&     0.889E-1 &  2.5 \\
 3&     0.216E-5 &  3.8&     0.888E-4 &  3.1&     0.162E-1 &  2.5 \\
 4&     0.482E-7 &  5.5&     0.578E-5 &  3.9&     0.223E-2 &  2.9 \\
\hline
    \end{tabular}%
\end{table}%

     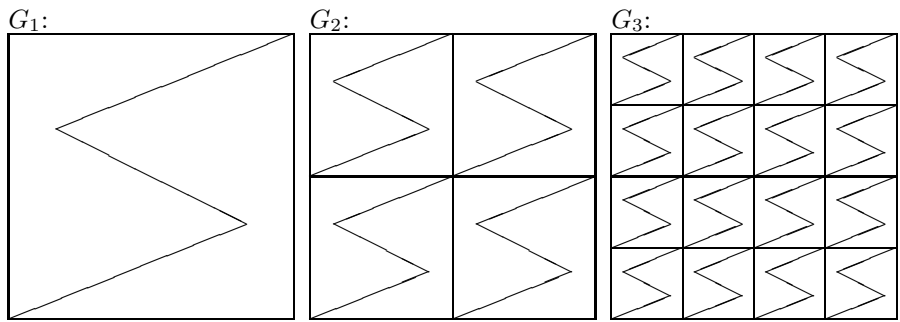
\begin{figure}[H] \setlength\unitlength{1.2pt}\begin{center}
    \begin{picture}(280,98)(0,0)
     \def\mc{\begin{picture}(90,90)(0,0)
       \put(0,0){\line(1,0){90}} \put(0,90){\line(1,0){90}}
      \put(0,0){\line(0,1){90}}  \put(90,0){\line(0,1){90}} 
      
       \put(15,60){\line(2,-1){60}}  
       \put(0,0){\line(5,2){75}}         \put(90,90){\line(-5,-2){75}}  
      \end{picture}}

    \put(0,0){\mc} \put(0,92){$G_1$:} \put(95,92){$G_2$:}  \put(190,92){$G_3$:}
    
      \put(95,0){\setlength\unitlength{0.6pt}\begin{picture}(90,90)(0,0)
    \put(0,0){\mc}\put(90,0){\mc}\put(0,90){\mc}\put(90,90){\mc}\end{picture}}
     \put(190,0){\setlength\unitlength{0.3pt}\begin{picture}(90,90)(0,0)
      \multiput(0,0)(90,0){4}{\multiput(0,0)(0,90){4}{\mc}} \end{picture}}
    \end{picture}
 \caption{The first three nonconvex polygonal grids for the computation in
    Tables \ref{t-3}--\ref{t-4}. }\label{grid2} 
    \end{center} \end{figure}

We compute the two numerical examples again on nonconvex polygonal meshes, shown in
   Figure \ref{grid2}.
We apply  the  $P_k$/$P_k$/$P_{k-1}$ WG finite elements, defined in \eqref{Wk}, 
      for $k=2,3$ and $4$.
The results are listed in  Tables \ref{t-3}--\ref{t-4}. 
We obtain the optimal orders of convergence for all cases.

\begin{table}[H]
  \centering  \renewcommand{\arraystretch}{1.1}
  \caption{The error and the computed order of convergence 
     for the solution \eqref{s-1} on Figure \ref{grid2} polygonal meshes. }
  \label{t-3}
\begin{tabular}{c|cc|cc|cc}
\hline
  $G_i$ &  $\|Q_0  u- u_0\|_0$  & $ h^r $ & $\|\nabla_w(Q_h u- u_h)\|_0$  & $ h^r $ &
      $\|E_w(Q_h u- u_h)\|_0$  & $ h^r $    \\
\hline&\multicolumn{6}{c}{ By the $P_2$/$P_2$/$P_{1}$ WG finite element \eqref{Wk}.}\\
\hline  
 4&     0.152E-3 &  1.6&     0.165E-2 &  1.7&     0.199E+0 &  0.9 \\
 5&     0.405E-4 &  1.9&     0.435E-3 &  1.9&     0.102E+0 &  1.0 \\
 6&     0.102E-4 &  2.0&     0.109E-3 &  2.0&     0.513E-1 &  1.0 \\
\hline&\multicolumn{6}{c}{ By the $P_3$/$P_3$/$P_{2}$ WG finite element \eqref{Wk}.}\\
\hline 
 3&     0.229E-4 &  3.3&     0.800E-3 &  2.5&     0.586E-1 &  2.2 \\
 4&     0.164E-5 &  3.8&     0.109E-3 &  2.9&     0.156E-1 &  1.9 \\
 5&     0.107E-6 &  3.9&     0.138E-4 &  3.0&     0.400E-2 &  2.0 \\
\hline&\multicolumn{6}{c}{ By the $P_4$/$P_4$/$P_{3}$ WG finite element \eqref{Wk}.}\\
\hline 
 2&     0.133E-4 &  5.4&     0.957E-3 &  5.3&     0.801E-1 &  4.8 \\
 3&     0.484E-6 &  4.8&     0.120E-3 &  3.0&     0.870E-2 &  3.2 \\
 4&     0.145E-7 &  5.1&     0.792E-5 &  3.9&     0.118E-2 &  2.9 \\
\hline
    \end{tabular}%
\end{table}%

\begin{table}[H]
  \centering  \renewcommand{\arraystretch}{1.1}
  \caption{The error and the computed order of convergence 
     for the solution \eqref{s-2} on Figure \ref{grid2} polygonal meshes. }
  \label{t-4}
\begin{tabular}{c|cc|cc|cc}
\hline
  $G_i$ &  $\|Q_0  u- u_0\|_0$  & $ h^r $ & $\|\nabla_w(Q_h u- u_h)\|_0$  & $ h^r $ &
      $\|E_w(Q_h u- u_h)\|_0$  & $ h^r $    \\
\hline&\multicolumn{6}{c}{ By the $P_2$/$P_2$/$P_{1}$ WG finite element \eqref{Wk}.}\\
\hline  
 4&     0.227E-3 &  1.3&     0.225E-2 &  1.4&     0.288E+0 &  0.9 \\
 5&     0.676E-4 &  1.7&     0.657E-3 &  1.8&     0.148E+0 &  1.0 \\
 6&     0.178E-4 &  1.9&     0.172E-3 &  1.9&     0.743E-1 &  1.0 \\
\hline&\multicolumn{6}{c}{ By the $P_3$/$P_3$/$P_{2}$ WG finite element \eqref{Wk}.}\\
\hline 
 3&     0.328E-4 &  3.3&     0.113E-2 &  2.0&     0.770E-1 &  2.1 \\
 4&     0.231E-5 &  3.8&     0.158E-3 &  2.8&     0.209E-1 &  1.9 \\
 5&     0.148E-6 &  4.0&     0.197E-4 &  3.0&     0.537E-2 &  2.0 \\
\hline&\multicolumn{6}{c}{ By the $P_4$/$P_4$/$P_{3}$ WG finite element \eqref{Wk}.}\\
\hline 
 2&     0.157E-4 &  5.2&     0.112E-2 &  5.2&     0.104E+0 &  4.8 \\
 3&     0.702E-6 &  4.5&     0.128E-3 &  3.1&     0.112E-1 &  3.2 \\
 4&     0.201E-7 &  5.1&     0.919E-5 &  3.8&     0.152E-2 &  2.9 \\
\hline
    \end{tabular}%
\end{table}%

\end{document}